\documentclass[11pt, english]{amsart}
\usepackage[utf8]{inputenc}

\usepackage{xcolor}
\usepackage{newtxmath}

\theoremstyle{plain}
\newtheorem{theorem}{Theorem}
\newtheorem*{theorem*}{Theorem}
\newtheorem*{conjecture*}{Conjecture}
\newtheorem{prop}[theorem]{Proposition}
\newtheorem{lemma}[theorem]{Lemma}

\def\CC{{\mathbb{C}}}

\def\PP{{\mathbb{P}}}
\def\QQ{{\mathbb{Q}}}\def\ZZ{{\mathbb{Z}}}

\def\cO{{\mathcal{O}}}

\def\lra{{\longrightarrow}}

\DeclareMathOperator{\pt}{pt}

\begin{document} 

\title[Quantum cohomology and irrationality of GM-fourfolds]{Quantum cohomology and irrationality of Gushel-Mukai fourfolds}
\author{Vladimiro Benedetti, Laurent Manivel, Nicolas Perrin}
\date{\today}

\address{
Universit\'e C\^ote d'Azur, CNRS, Laboratoire J.-A. Dieudonn\'e, Parc Valrose, F-06108 Nice Cedex 2, France} 
\email{vladimiro.benedetti@univ-cotedazur.fr}

\address{
Institut de Math\'ematiques de Marseille, 
Aix-Marseille University, CNRS, I2M, UMR 7373, Marseille, France}
\email{laurent.manivel@math.cnrs.fr}

\address{
Centre de Math\'ematiques Laurent Schwartz (CMLS), CNRS, \'Ecole polytechnique,
Institut Polytechnique de Paris, F-91120 Palaiseau, France}
\email{nicolas.perrin.cmls@polytechnique.edu}

\begin{abstract} 
We compute the small quantum cohomology  of Gushel-Mukai fourfolds. Following \cite{kkpy}, our computations imply that the very general ones are not rational.  Following \cite{jg}, and thanks to a suitable deformation of the small quantum cohomology ring, we also deduce that a rational Gushel-Mukai fourfold has the same rational cohomology as some K3 surface.
\end{abstract} 

\maketitle

\section{Introduction}

After the spectacular, long awaited proof that the very general complex cubic fourfold is not rational \cite{kkpy}, the theory of atoms 
introduced in this paper will certainly lead to many new exciting developments.

\smallskip
One of the first cases to which one would be tempted to apply the same approach is that of Gushel-Mukai fourfolds. Indeed, these well-studied Fano manifolds share many of the peculiar features of cubic fourfolds. 
In particular, their middle cohomology is of K3 type and allows one to define a period map, with lattice theoretic, sometimes geometric connections with some polarised K3 surfaces (and also cubic fourfolds, see \cite{dim}).
They are also related with families of hyperK\"ahler manifolds, double EPW sextics \cite{im}, exactly as cubic fourfolds to their Fano varieties of lines. Their derived category contains a K3 category, the so-called Kuznetsov component, whose behavior is also expected to be related to rationality \cite{kp}. 

\smallskip 
One difference with cubic fourfolds is that the latter have index $3$, while Gushel-Mukai fourfolds have index $2$. This makes the quantum cohomology a bit more complicated to compute. In \cite{kkpy}, the relevant information for cubic fourfolds is deduced from the work of Givental, as a special case of his computations for complete intersections in toric varieties. 
Gushel-Mukai varieties being outside the toric world, we compute directly its quantum cohomology -- more precisely, the small quantum products of ambient cohomology classes, those that are restricted from the Grassmannian they live in. This can readily be done using our understanding of lines and conics on Gushel-Mukai manifolds, and classical intersection theory. 

\begin{theorem}\label{matrix}
The quantum multiplication by the hyperplane class is given by the following matrix:
$$\begin{pmatrix}
 0&6q&0&0&24q^2&0 \\
 1&0&10q&6q&0&24q^2 \\
 0&1&0&0&4q&0 \\
 0&1&0&0&2q&0 \\
 0&0&3&2&0&6q \\
 0&0&0&0&1&0 
 \end{pmatrix}$$
 For $q$ generic, it has four nonzero simple eigenvalues and a two dimensional kernel.
 \end{theorem}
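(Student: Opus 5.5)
The plan is to compute the small quantum product $h\star\gamma$ for every class $\gamma$ in the ambient part $H^*_{amb}(X)$ of a Gushel--Mukai fourfold $X=\mathrm{Gr}(2,5)\cap\PP^8\cap Q$. We work in the basis $1,\ h,\ \sigma_{1,1},\ \sigma_2,\ \tau,\ [\pt]$, where $\sigma_{1,1},\sigma_2$ are restrictions of Schubert classes and $\tau$ is the restricted class in $H^6$, e.g. the class of a line. The matrix in the statement is read column by column as the images of these basis vectors.

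First I will check that this subspace is stable under $h\star$, which is not automatic because we truncate to ambient classes. The idea is monodromy: the Gromov--Witten invariants $\langle h,\gamma,\delta\rangle_d$ with $\gamma$ ambient and $\delta$ in the vanishing (K3-type) part are monodromy invariant, hence vanish. Since $X$ has index $2$ and $\dim X=4$, $\deg q=2$. Hence only $d=0,1,2$ contribute, and
\[h\star\gamma = h\cdot\gamma + q\sum_\delta \langle h,\gamma,\delta\rangle_1\,\delta^\vee + q^2\sum_\delta\langle h,\gamma,\delta\rangle_2\,\delta^\vee .\]
By the divisor axiom, $\langle h,\gamma,\delta\rangle_d=d\,\langle\gamma,\delta\rangle_d$, so everything reduces to two-point invariants. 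The classical part comes from Schubert calculus on $\mathrm{Gr}(2,5)$ restricted to $X$. Here we use $h^2=\sigma_{1,1}+\sigma_2$, $h\sigma_{1,1}$ and $h\sigma_2$ expressed in terms of $\tau$ (giving the entries $3,2$), $h\tau=[\pt]$, together with the intersection form on $H^4_{amb}$ computed from $\deg X=10$.

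Degree-count constraints then determine which invariants are needed. For lines ($d=1$) we need $\operatorname{codim}\gamma+\operatorname{codim}\delta=5$. This gives three kinds of invariant:
\begin{itemize}
\item $\langle 1\text{-codim},\pt\rangle_1$: the number of lines through a general point meeting a hyperplane section, namely the $6$ lines through a general point, which produces the entry $6q$.
\item $\langle \sigma,\tau\rangle_1$ for $\sigma\in H^4$: lines meeting a codimension-$2$ cycle and a curve-class cycle, which produce the entries $10q,6q,4q,2q$.
\item $\langle \tau,\pt\rangle_1$ type terms: these produce the final $6q$.
\end{itemize}
For conics ($d=2$) we need codimensions summing to $7$, i.e. $\langle\tau,\pt\rangle_2$ and $\langle h,\pt\rangle_2$-type counts. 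These give the entries $24q^2$, i.e. conics through a general point meeting a line-class cycle. I will compute these using the known geometry: the family of lines through a point is a finite set cut out in $\mathrm{Gr}(2,5)$, and conics on $X$ correspond to $\PP^2$-sections of the Grassmannian hull. A conic through a general point $p$ then lies in a $\PP^2\cap Q$, and the count becomes an intersection number on a Grassmannian or quadric. Comparing with the associativity/WDVV relations provides cross-checks, and can even determine the $q^2$ entries from the $q$ entries.

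The main obstacle will be the conic counts giving $24q^2$, together with ensuring enumerativity (transversality for general $X$ and general cycles). I would try first to derive them from associativity, $h\star(h\star\gamma)$ computed two ways using $h^2=\sigma_{1,1}+\sigma_2$, rather than by direct geometry.

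Finally, for the spectral claim I compute the characteristic polynomial of the matrix. Degree considerations force it to be of the form $t^2(t^4-aqt^2+bq^2)$, where $t$ has degree $2$ and the matrix is weighted homogeneous. One checks that the kernel is $2$-dimensional by exhibiting two independent kernel vectors built from $\sigma_{1,1},\sigma_2,\tau,[\pt]$ with $q$-coefficients. It then remains to verify that $b\neq0$ and $a^2-4b\neq0$, so that the four nonzero roots $\pm\sqrt{\cdot}$ are simple for $q\neq0$.
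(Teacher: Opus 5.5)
\textbf{There is a genuine gap: the three invariants that carry the content of the matrix are never computed.}

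Your reduction is the paper's. By the divisor axiom and the dimension count, $h\star$ on ambient classes is governed by four two-point invariants:
\[
I^{(1)}_1=\langle\sigma_1,\pt\rangle_1,\quad I^{(1)}_2=\langle\sigma_2,\mathrm{line}\rangle_1,\quad I^{(1)}_3=\langle\sigma_{11},\mathrm{line}\rangle_1,\quad I^{(2)}=\langle\sigma_3,\pt\rangle_2 .
\]
Your monodromy argument for stability of the ambient part is also fine. But you never compute $I^{(1)}_2=10$, $I^{(1)}_3=6$ or $I^{(2)}=12$. The entries $10q$, $6q$, then $4q,2q$ (from $2I^{(1)}_2q\sigma_2^\vee+2I^{(1)}_3q\sigma_{11}^\vee=4q\sigma_2+2q\sigma_{11}$), and $24q^2=2I^{(2)}q^2$ are simply read off the statement.

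The shortcut you propose cannot produce them:
\begin{itemize}
\item Computing $h\star(h\star\gamma)$ ``two ways'' with $h\star h=\sigma_2+\sigma_{11}+I^{(1)}_1q$ only defines $(\sigma_2+\sigma_{11})\star$ as $(h\star)^2-I^{(1)}_1q$. It puts no condition on the matrix of $h\star$.
\item The genuine constraint is $[h\star,\sigma_{11}\star]=0$, but it brings in new three-point invariants. Solving it, as in Section~\ref{sec_ambient}, yields only $J^{(1)}_1+J^{(1)}_2=8I^{(1)}_3-2I^{(1)}_1$ and expressions of the remaining three-point invariants (e.g.\ $J^{(2)}$) in terms of $I^{(2)}$, the $I^{(1)}_i$ and $J^{(1)}_1$.
\item The four two-point invariants therefore remain free. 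In particular the $q^2$ entries are not determined by the $q$ entries.
\end{itemize}

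They must be computed geometrically, as the paper does.
\begin{itemize}
\item For $I^{(1)}_2$ and $I^{(1)}_3$: apply Thom--Porteous on $\Delta\times\Sigma$, a line times a Schubert surface (desingularized for $\sigma_2$). The joining line lies in $G(2,5)$ along the degeneracy locus of $U_2\oplus U'_2\to V_5$, and in $X$ after one more linear condition. This gives $\int_\Sigma(s_2(U'_2)+c_1(U'_2)^2)$.
\item For $I^{(2)}$: take the $\PP^2$-bundle over $\PP^1$ of spaces $L_3\subset\wedge^2V_4\cap\overline{\Omega}_1$, and cut the locus where $\omega_2|_{L_3}$ is proportional to the Pl\"ucker quadric. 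This is a degeneracy locus of $\cO\oplus\det(V_4)^\vee\to M_4$.
\end{itemize}
Your ``intersection number on a Grassmannian or quadric'' misses this proportionality condition. Enumerativity also needs a justification: it comes from smoothness of the Hilbert schemes of lines and conics (Proposition~\ref{smooth}), together with moving Schubert representatives.

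\textbf{Bookkeeping errors to fix.} Two-point degree-$d$ invariants need codimensions summing to $3+2d$, so ``$\langle\tau,\pt\rangle_1$'' and ``$\langle h,\pt\rangle_2$'' are empty. The last $6q$ again counts lines through a point, since $\langle h,\sigma_{31},h\rangle_1=2I^{(1)}_1$. Both $24q^2$ entries come from the single invariant $I^{(2)}$.

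\textbf{Basis mismatch.} The matrix is written in the basis $\sigma_0,\sigma_1,\sigma_2,\sigma_{11},\sigma_3,\sigma_{31}$, with $\sigma_3=2[\mathrm{line}]$ and $\sigma_{31}=2\pt$. With your ordering, $\tau=[\mathrm{line}]$ and $[\pt]$, one has $h\sigma_{11}=4\tau$ and $h\sigma_2=6\tau$, so you would obtain a conjugate but different matrix.

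The spectral part of your plan is fine once the matrix is known: the characteristic polynomial is $t^2(t^4-44qt^2-16q^2)$.
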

 
As a result, the quantum cohomology of a very general Gushel-Mukai contains several atoms that have the necessary features to be obstructions to rationality. 
To be more precise, the irrationality criterion of 
\cite[Proposition 5.30]{kkpy} allows one to conclude
(Theorem \ref{thm_irrationality_GM}) that:

\begin{quote}
\emph{A Hodge general Gushel-Mukai fourfold is irrational.}
\end{quote} 

We also compute the full multiplication table of quantum products of ambient classes, and deduce a presentation of this algebra (Theorem \ref{presentation}). Then we extend the results of \cite{jg}, who proved that the primitive cohomology of a rational cubic fourfold must be isomorphic to  
the degree two cohomology of some K3 surface, at least over $\QQ$. For Gushel-Mukai fourfolds the same approach does not extend on the nose, because the small quantum product by the  hyperplane class has a two dimension kernel in the space of ambient classes, rather than a non trivial Jordan block of size two
for cubic fourfolds. In order to overcome this problem, using our explicit computations, we are able to find a suitable deformation of the small quantum cohomology ring in the big one, for which the required Jordan
block does show up. We deduce: 

\begin{quote}
\emph{The rational primitive cohomology of a rational Gushel-Mukai fourfold must be Hodge isomorphic to the degree two rational cohomology of some K3 surface. }
\end{quote} 

\medskip\noindent {\it Acknowledgements}. We warmly thank Jérémy Guéré for sharing \cite{jg} and for many useful discussions. Thanks also to Franco Rota and Franco Giovenzana for the workshop they organized on \cite{kkpy}. All authors  were supported by the project FanoHK ANR-20-CE40-0023.

\section{Gushel-Mukai fourfolds} 

\subsection{Lines and conics on Gushel-Mukai fourfolds}
Consider a smooth (ordinary, in the terminology of \cite{debkuz}) Gushel-Mukai fourfold 
$$X=G(2,5)\cap \Omega_1\cap \Omega_2,$$  
where $\Omega_1$ and $\Omega_2$ are a hyperplane and a quadric in $\PP^9=\PP(\wedge^2\CC^5)$. We frequently use the notation $G(2,V_5)\subset \PP(\wedge^2V_5)$ for such a Grassmannian, $V_5$ being a $5$-dimensional vector space. Consistently, we usually denote a vector space by a capital letter indexed by the  dimension. 

We will always suppose in the sequel that $\Omega_1$ and $\Omega_2$ are generic. We fix equations that we denote by $\omega_1$ and $\omega_2$, respectively: $\omega_1$
corresponds to a skew-symmetric bilinear form on $V_5$, that we will denote in the same way; $\omega_2$ is a quadratic form on  $\wedge^2V_5$, and we will keep the same notation for the corresponding symmetric bilinear form. 

\medskip
Gushel-Mukai fourfolds are prime Fano varieties of index $2$. They contain a three-dimensional family of lines, and a five dimension family of conics. We record the following statement (see \cite[Proposition 5.3]{debkuz}, \cite[Theorem 3.2]{im}, \cite[Proposition 7.12]{debkuz2}):

\begin{prop}\label{smooth}
On a general Gushel-Mukai fourfold, the Hilbert schemes of lines and conics are smooth. 
\end{prop}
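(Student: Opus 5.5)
The strategy is deformation theory. For a line or conic $C\subset X$, smoothness of the Hilbert scheme at $[C]$ follows from the vanishing $H^1(C,N_{C/X})=0$. In that case the Hilbert scheme is smooth of dimension $h^0(N_{C/X})=\chi(N_{C/X})$ at $[C]$.

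By Riemann--Roch on $C\simeq\PP^1$ (or on a reducible or non-reduced conic, via the arithmetic genus zero computation), we get $\chi(N_{C/X})=\deg N_{C/X}+\mathrm{rk}\,N_{C/X}=(-K_X\cdot C-2)+3$. Since $-K_X=2H$, this gives $3$ for lines and $5$ for conics, which matches the expected dimensions recorded above. So the whole task is to prove $H^1(N_{C/X})=0$ for every line and every conic when $\Omega_1,\Omega_2$ are general.

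\textbf{Lines.} A line is $\PP(V_1\otimes V_4/V_1)$-type: it corresponds to a flag $V_1\subset V_4$, with $N_{L/G(2,5)}=\cO^2\oplus\cO(1)^3$. The normal bundle sequence is
$$0\lra N_{L/X}\lra N_{L/G}\lra \cO_L(1)\oplus\cO_L(2)\lra 0.$$
Since $H^1(N_{L/G})=0$, the vanishing $H^1(N_{L/X})=0$ is equivalent to surjectivity of $H^0(N_{L/G})\to H^0(\cO(1))\oplus H^0(\cO(2))$. This map is given by the differentials of $\omega_1,\omega_2$ along $L$.

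I would prove surjectivity by an incidence-variety dimension count. Consider the incidence variety of pairs (flag, $(\omega_1,\omega_2)$) with $L\subset X$ where the map fails to be surjective. Fix the flag; the condition $L\subset X$ imposes $2+3$ linear conditions on $(\omega_1,\omega_2)$, and failure of surjectivity is a further determinantal condition. I would check that the failure locus has codimension at least $\dim\mathrm{Fl}(1,4;5)+1=8$ in the space of pairs satisfying $L\subset X$. Its image in the parameter space of $(\Omega_1,\Omega_2)$ is then a proper closed subset.

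\textbf{Conics.} Conics come in several types: $\tau$-conics, $\sigma$-conics, and $\rho$-conics, according to the linear span of the corresponding subspace of $V_5$. Each type also has reducible and double degenerations. For each type I would write $N_{C/G(2,5)}$ explicitly and use the analogous sequence
$$0\to N_{C/X}\to N_{C/G}\to \cO_C(1)\oplus \cO_C(2)\to 0.$$
I then run the same surjectivity-plus-dimension-count argument over the parameter space of conics of each type (dimensions $8$, $9$, $10$ roughly in $G(2,5)$), checking that the bad locus projects to a proper subset.

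The main obstacle is the conic case. The degenerate conics (reducible and non-reduced ones), and the $\rho$-conics whose plane lies in $G(2,V_5)$, require separate treatment. For these, $N_{C/G}$ may have $H^1\neq0$ or the section spaces behave differently. There I would first try to reduce to a local computation at the singular point. If that fails, I would fall back on the cited results: \cite[Theorem 3.2]{im}, which identifies the conic Hilbert scheme with a $\PP^1$-bundle over a double EPW sextic that is smooth for general $X$, and \cite[Proposition 5.3]{debkuz} for lines. These handle the general case uniformly.
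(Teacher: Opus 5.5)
Your reduction is the right framework. Since $X$ is a transverse complete intersection in $G=G(2,V_5)$, $H^1(N_{C/X})=0$ amounts to surjectivity of $H^0(N_{C/G})\lra H^0(\cO_C(1))\oplus H^0(\cO_C(2))$. Your worry that $H^1(N_{C/G})$ might be nonzero for degenerate conics is unfounded. The image of $T_G|_C\lra N_{C/G}$ is globally generated on a curve with $H^1(\cO_C)=0$, and the cokernel is a skyscraper at the node, or $\cO_L(2)$ for a double line $2L$.

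\textbf{The gap.} The only step with content is that, for general $(\omega_1,\omega_2)$, this surjectivity holds for every line and every conic of $X$. You announce it but do not carry it out, and for lines you set it up so that it cannot succeed.
\begin{itemize}
\item Lines of $G$ are the pencils $\{V_1\subset U\subset V_3\}$, parametrized by $\mathrm{Fl}(1,3;V_5)$ of dimension $8$. Your own splitting $N_{L/G}=\cO^2\oplus\cO(1)^3$, with $h^0=8$, confirms this. A flag $V_1\subset V_4$ gives a $\sigma$-plane, not a line.
\item Let $P$ be the space of pairs $(\omega_1,\omega_2)$ and $W_L\subset P$ the codimension-$5$ subspace of pairs with $L\subset X$. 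The incidence then has dimension $8+\dim P-5=\dim P+3$. What you need is that the non-surjectivity locus $B_L\subset W_L$ has codimension $\geq 4$.
\item Your target, codimension $\geq 8$ in $W_L$, is impossible. $B_L$ is the locus where a $5\times 8$ matrix depending linearly on $W_L$ has rank $\leq 4$. It contains $0$, and by the Eagon--Northcott bound each of its components has codimension $\leq (8-4)(5-4)=4$.
\end{itemize}
So the required bound is exactly the expected codimension, and it must be checked by an explicit computation. One line suffices by $GL(V_5)$-homogeneity, but the check must cover all of $W_L$, including degenerate $\omega_1$. That computation is missing.

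For conics nothing is actually set up. The families of $\tau$-, $\sigma$- and $\rho$-conics in $G$ have dimensions $13$, $12$, $11$, not $8,9,10$. Containment in $X$ imposes $3+5=8$ conditions. So for each $GL(V_5)$-orbit $O$ of conics, you would need the bad locus to have codimension $>\dim O-8$ in the space of pairs containing $C$. This includes the reducible and double conics you defer. For the open orbit the required value is again the sharp one, $6=(13-7)(8-7)$.

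Consequently your argument proves the statement only through the fallback to \cite{debkuz}, \cite{im}, \cite{debkuz2}. That is exactly what the paper does: the proposition is recorded from \cite[Proposition 5.3]{debkuz}, \cite[Theorem 3.2]{im}, \cite[Proposition 7.12]{debkuz2}, without proof. If you cite, cite the smoothness statements themselves. The paper describes the contraction of the conic scheme onto the double EPW sextic only as a $\PP^1$-fibration (\cite[Proposition 4.18]{im}). A $\PP^1$-fibration over a smooth base gives a smooth total space only if the morphism is smooth.
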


For a cubic fourfold, the Fano variety of lines has a famous hyperK\"ahler structure; 
for a Gushel-Mukai fourfold, the scheme of conics contracts, through a $\PP^1$-fibration, to another type of polarized hyperK\"ahler fourfolds, called double EPW sextics (see \cite[Proposition 4.18]{im}, \cite[Proposition 7.12]{debkuz2}, and also \cite{glz} for an interpretation in terms of Bridgeland moduli spaces). 

\subsection{Ambient cohomology}
A Gushel-Mukai fourfold has Hodge diamond 

\small
$$\begin{matrix}
&&&&1&&&&\\
&&&0&&0&&&\\
&&0&&1&&0&& \\
&0&&0&&0&&0&\\
0&&1&&22&&1&&0 \\
&0&&0&&0&&0&\\
&&0&&1&&0&& \\
&&&0&&0&&&\\
&&&&1&&&&
\end{matrix}$$
\normalsize

\medskip
Its {\it ambient cohomology} is the part of its rational cohomology ring inherited from $G(2,5)$. It  has dimension $6$, and is generated by $\sigma_0=1$, $\sigma_1$, $\sigma_2$, $\sigma_{11}$, $\sigma_3$, $\sigma_{31}$, the restrictions of the corresponding Schubert cycles on $G(2,5)$, that we denote in the same way.  We denote by $\sigma_0^\vee$, $\sigma_1^\vee$, $\sigma_2^\vee$, $\sigma_{11}^\vee$, $\sigma_3^\vee$, $\sigma_{31}^\vee$ the Poincaré dual basis. For any two classes $ \sigma$ and  $\sigma'$, we have 
\begin{equation}\label{restriction}
\int_X \sigma\sigma' = 2\int_{G(2,5)}\sigma\sigma'\sigma_1^2.
\end{equation}
We readily deduce that
$$\sigma_0^\vee =\pt = \frac{1}{2}\sigma_{31}=\frac{1}{2}\sigma_{22},\qquad
\sigma_1^\vee = \frac{1}{2}\sigma_3=
\frac{1}{4}\sigma_{21},$$
$$\sigma_2^\vee= \frac{1}{2}(\sigma_2-\sigma_{11}),\qquad 
\sigma_{11}^\vee = \sigma_{11}-\frac{1}{2}\sigma_2,$$  $$\sigma_3^\vee=\frac{1}{2}\sigma_1,\qquad  \sigma_{31}^\vee = \frac{1}{2}\sigma_0.$$

\medskip
Here we also denoted by $\pt$ the point class of $X$. In our formulas we often omit $\sigma_0$.

\medskip
For $X$ very general, every Hodge class is inherited from $G(2,5)$ by \cite[Corollary 4.6]{dim}:
$$H^\bullet(X)^{\mathrm{Hod}}=H^\bullet(X)^{\mathrm{amb}}=\QQ\langle \sigma_0, \sigma_1, \sigma_2, \sigma_{11}, \sigma_3, \sigma_{31}\rangle .$$
Moreover $H^\bullet(X)^{\mathrm{amb}}$ is a subalgebra of
$H^\bullet(X)$, for the usual cup-product which is easily determined. 
The multiplication by the hyperplane class $\sigma_1$ is obtained by restricting the usual Pieri rule on $G(2,5)$, which yields 
$$\sigma_0\mapsto\sigma_1, \quad \sigma_1\mapsto\sigma_2+\sigma_{11}, \quad \sigma_2\mapsto 3\sigma_{3}, \quad \sigma_{11}\mapsto 2\sigma_{3}, \quad \sigma_{3}\mapsto \sigma_{31},\quad \sigma_{31}\mapsto 0.$$
Only one extra computation is needed to determine the usual product on $H^\bullet(X)^{\mathrm{amb}}$ completely; from formula (\ref{restriction}) we deduce that $$\sigma_2^2=4 \pt, \quad  \sigma_2\sigma_{11}=\sigma_{11}^2=2 \pt .$$

\section{Quantum multiplication by the hyperplane class}
The ambient cohomology $H^\bullet(X)^{\mathrm{amb}}$ is also a subalgebra for the quantum product; in particular it is preserved by quantum multiplication with the hyperplane class. In the next subsection, we will compute the full quantum product of ambient classes.

Since $X$ has index $2$ and dimension four, we will only need Gromov-Witten invariants of degree one and two to determine the quantum multiplication by the hyperplane class. 
If $\alpha_1,\dots,\alpha_n$ are cohomology classes, we denote by $\langle \alpha_1,\dots,\alpha_n\rangle_d$ the degree $d$, $n$-pointed Gromov-Witten invariant with insertions $\alpha_1,\dots,\alpha_n$. This invariant is symmetric with respect to $\alpha_1,\dots,\alpha_n$. Moreover, 
 if $\alpha_1=\sigma_1$ is the hyperplane class, then 
$\langle \alpha_1,\dots,\alpha_n\rangle_d$=$d\langle \alpha_2,\dots,\alpha_n\rangle_d$ by the divisor axiom. 
The quantum multiplication by the hyperplane class is a priori of the form
$$\sigma_1\star \sigma_0 = \sigma_1, \qquad \sigma_1\star \sigma_1 = \sigma_2+\sigma_{11}+I^{(1)}_1q\sigma_0, $$
$$\sigma_1\star \sigma_2 = 3\sigma_{3}+I_2^{(1)}q\sigma_1, \qquad \sigma_1\star \sigma_{11} = 2\sigma_{3}+I_3^{(1)}q\sigma_1,$$
$$\sigma_1\star \sigma_3 = \sigma_{31}+2I_2^{(1)}q\sigma_{2}^\vee+2I_3^{(1)}q\sigma_{11}^\vee+2I^{(2)}q^2\sigma_0,$$
$$\sigma_{31}\star \sigma_1 = I^{(1)}_1q\sigma_{3}+2I^{(2)}q^2\sigma_1,$$

\medskip\noindent
in terms of the following two-pointed Gromov-Witten 
invariants:
$$I^{(1)}_1:=\langle \sigma_1, \sigma_0^\vee\rangle _1=\frac{1}{2}\langle  \sigma_1, \sigma_{31}\rangle _1=\langle \sigma_{31}, \sigma_{3}^\vee\rangle _1,$$
$$I^{(1)}_2=\langle \sigma_2, \sigma_1^\vee\rangle _1=\frac{1}{2}\langle \sigma_2, \sigma_{3}\rangle _1, \qquad 
I^{(1)}_3=\langle \sigma_{11}, \sigma_1^\vee\rangle _1=\frac{1}{2}\langle \sigma_{11}, \sigma_{3}\rangle _1, $$
$$I^{(2)}=\langle \sigma_{3},  \sigma_{0}^\vee\rangle _2=\frac{1}{2}\langle \sigma_{31}, \sigma_3\rangle _2=\langle \sigma_{31}, \sigma_{1}^\vee\rangle _2.$$

\medskip
So we have only four Gromov-Witten invariants to compute, and we will compute them geometrically. As a matter of fact, these invariants are enumerative because, as we recalled in Proposition \ref{smooth}, the Hilbert schemes of lines and conics of $X$ are smooth, hence their virtual fundamental classes are just the usual fundamental classes; moreover the classes we evaluate are restricted from the Grassmannian, hence can be moved freely. Therefore, by choosing general representatives of these classes, their pull-back via the evaluation morphisms to the moduli space of pointed maps can (and will) be supposed to intersect transversely among themselves and with the virtual fundamental class; hence our claim that the Gromov-Witten invariants we need to compute are all enumerative.

\subsection*{Computation of $I^{(1)}_1$.} This is the number of lines in $X$ through a generic point defined by a plane $A_2$. 
Lines in $G(2,5)$ through this point are parame\-tri\-zed by $\PP(A_2)\times \PP(V_5/A_2)$, or 
more explicitly by pairs $(B_1,B_3)$ such that $B_1\subset A_2\subset B_3$. They are contained in $X$ 
if $\omega_1(B_1,B_3)=0$ and $\omega_2(B_1\wedge B_3)=0$. Since we already know that $[\wedge^2A_2]$ 
is a point of $X$, 
this reduces to the conditions $\omega_1(B_1,B_3/A_2)=0$ and 
$\omega_2(\wedge^2A_2,B_1\otimes (B_3/A_2))=\omega_2(B_1\otimes (B_3/A_2))=0$.
If $h$ and $H$ denote the hyperplane classes on    $\PP(A_2)$ and $ \PP(V_5/A_2)$, 
respectively, we get the intersection in $\PP(A_2)\times \PP(V_5/A_2)$ of three 
hypersurfaces of 
classes $h+H, h+H$ and $2(h+H)$. So  $$I^{(1)}_1=\int_{\PP^1\times \PP^2}2(h+H)^3=6.$$

\noindent  {\it Remark.}
For another computation of this invariant, see \cite[Lemma 5.6]{debkuz}.

\subsection*{Computation of $I^{(1)}_3$.}

Since $I^{(1)}_3=\langle \sigma_{11}, \mathrm{line}\rangle _1$, we consider a generic line $\Delta$ in $X$ defined by a pair $(B_1,B_3)$, and a generic Schubert cycle of class $\sigma_{11}$ -- this is just the intersection of $X$ with a subGrassmannian $G(2,V_4)$, hence a del Pezzo surface $\Sigma$ of degree four. If $U_2$ (resp. $U'_2$) is a plane parametrized by a point of $\Delta$ (resp. $\Sigma$), the line joining these points will be contained in $G(2,5)$ when $U_2$ and $U'_2$ meet non-trivially, and then contained in $X$ when $\omega_2(\wedge^2U_2,\wedge^2U'_2)=0$. The first condition can be expressed by the fact that we lie in the degeneracy locus of the natural morphism $U_2\oplus U'_2\lra V_5$. According to the Thom-Porteous formula, the class of this locus is $s_2(U_2\oplus U'_2)$, where $s_2$ denotes the second Segre class and we consider $U_2$ and $U'_2$ as vector bundles on $\Delta$ and $\Sigma$, respectively. We deduce that 
$$I^{(1)}_3=\int_{\Delta\times \Sigma}s_2(U_2\oplus U'_2)(c_1(U_2)+c_1(U'_2))=\int_\Sigma(s_2(U'_2)+c_1(U'_2)^2)=6.$$

\subsection*{Computation of $I^{(1)}_2$.}

Since $I^{(1)}_2=\langle \sigma_{2}, \mathrm{line}\rangle _1$, as before we consider a generic line $D$ in $X$ defined by a pair $(B_1,B_3)$, and a generic Schubert cycle of class $\sigma_{2}$, which is the set of planes intersecting non trivially a fixed plane $V_2$. This Schubert cycle is singular at $[V_2]$
(which is not a point of $X$ is general), and it is convenient to replace it by its obvious desingularization $\Gamma$, parametrizing pairs $(U_1\subset U_2)$ with $U_1\subset V_2$ -- a $\PP^3$-bundle above $\PP(V_2)$. Its intersection $\Sigma$ with $X$ is obtained by imposing that 
$\omega_1$ and $\omega_2$ vanish on $U_1\wedge U_2$. By the same argument as in the previous case, we deduce that 
$$I^{(1)}_2=\int_\Sigma(s_2(U_2)+c_1(U_2)^2)=2\int_\Gamma c_1(U_1\wedge U_2)^2(s_2(U_2)+c_1(U_2)^2).$$
Let $h=-c_1(U_1)$ be the hyperplane class of $\PP(V_2)$ and $H=-c_1(U_2/U_1)$. Then $c_1(U_1\wedge U_2)=c_1(U_2)=-h-H$, while $s_2(U_2)=H^2+hH$. Since  $\int hH^3=1$ and $\int H^4=-c_1(V_4/U_1)=-1$, we 
finally get 
$$I^{(1)}_2=2\int_\Gamma (H+h)^3(2H+h) =2\int_\Gamma (2H^4+7H^3h)= 2(-2+7)=10.$$

\subsection*{Computation of $I^{(2)}$.}  Since $\sigma_{31}$ is twice the punctual class of $X$, the Gromov-Witten invariant  
$I^{(2)}=2\langle \pt, \sigma_{1}^\vee\rangle _2$
where $ \sigma_{1}^\vee$ is the class of a line. So 
$I^{(2)}$ equals twice the number of conics passing through a generic point $A_2$ of $X$ and meeting a generic line $D$ in $X$, defined by a pair $(B_1,B_3)$.

Recall that a conic in $G(2,V_5)$ that is not a double line must be a linear section of some subGrassmannian $G(2,V_4)$. Hence the natural parameter space to consider is the Grassmann bundle $G(3,\wedge^2V_4)$ over $\PP(V_5)^\vee$. 
If we incorporate the conditions that the conic passes through $\wedge^2A_2$ (which implies that $A_2\subset V_4$), and that it is contained in $\Omega_1$, we are reduced to the Grassmann bundle 
$G(2,(\wedge^2V_4\cap \overline{\Omega}_1)/\wedge^2A_2)$ over $\PP(V_5/A_2)^\vee$, where we denote by  $\overline{\Omega}_1$ the hyperplane of $\wedge^2V_5$ corresponding to $\Omega_1$.
Note that since $\overline{\Omega}_1$ is generic, it never contains  $\wedge^2V_4$ ; in particular we get a honest $G(2,4)$-bundle over $\PP^2$. 

Consider a three-dimensional space $L_3$, with $\wedge^2A_2\subset L_3\subset \wedge^2V_4\cap \overline{\Omega}_1$. If it meets the line $D$, this will necessarily be at the point defined by $B_2=B_3\cap V_4$; in particular $V_4$ has to contain $B_1$, and $L_3$ has to contain $\wedge^2 B_3\cap V_4$. This reduces again our parameter space to a $\PP^2$-bundle $\Theta$ over $\PP(V_5/(A_2\oplus B_1))\simeq \PP^1$.

Now we need to impose the condition that the intersection of $G(2,V_4)$ with $\PP(L_3)$ is contained in $X$, or equivalently in $\Omega_2$. This means that the quadratic form on $L_3$ defined by $\omega_2$, and the Pl\"ucker quadratic form
$S^2L_3\hookrightarrow S^2(\wedge^2V_4)\lra \wedge^4V_4$, must be proportional. We already know that these two quadratic forms vanish on $a=\wedge^2A_2$ and $b=\wedge^2(B_3\cap V_4)$, so we only need to check they are proportional in the space $M_4\subset S^2L_3^\vee$ of quadratic forms vanishing on those lines. Finally, our condition is to belong to the degeneracy locus of the corresponding morphism $\cO\oplus \det(V_4)^\vee\lra M_4$. Since $M_4$ is a rank four vector bundle, the expected codimension is correct and we can apply the Thom-Porteous formula, to deduce that 
$$\frac{1}{2}I^{(2)}=\int_\Theta c_3(M_4-\det(V_4)^\vee).$$
Let $h=-c_1(V_4)$ be the hyperplane class. In order to compute this intersection number, we can use the exact sequence $$0\lra M_4\lra S^2L_3^\vee\lra a^{-2}\oplus b^{-2}=\cO\oplus\cO(2h)\lra 0.$$
This allows us to express $I^{(2)}$ as 
$$\frac{1}{2}I^{(2)}=\int_\Theta \frac{c(M_4)}{1+h}=\int_{\Theta}\frac{c(S^2L_3^\vee)}{(1+h)(1+2h)}.$$
On the other hand, recall that $L_3$ must contain $L_2=a+b$, hence 
$$c(S^2L_3^\vee)=c(S^2L_2^\vee)c(L_2^\vee\otimes \ell)c(\ell^2),$$
where $\ell=c_1((L_3/L_2)^\vee)$ is a relative hyperplane class. Since the vector bundle $L_2\simeq\cO\oplus\cO(-h)$, we  have $c_1(L_2^\vee)=h$ and $c_1(S^2L_2^\vee)=3h$. 
We deduce that

$$\frac{1}{2}I^{(2)}=\int_\Theta (1+\ell)(1+\ell+h)(1+2\ell)=
2\int_\Theta\ell^3+2\int_\Theta h\ell^2.$$

\medskip\noindent
But  $\int h\ell^2=1$ and $\int \ell^3=-\int_{\PP^1}c_1((\wedge^2V_4\cap H)/L_2)=2$, so finally $I^{(2)}=12$.  \medskip 

\subsection*{Quantum products with primitive classes}
The following result is not strictly needed, and could also be deduced from the  irrationality arguments that appear in the next section. However we can give here an independent (quite standard) proof, that we include for the sake of completeness. 

Recall that if $j:X \hookrightarrow G(2,5)$ denotes the natural embedding, the primitive
cohomology of $X$ is $H^\bullet(X)^{\mathrm{prim}}:=\ker (h\cup(\cdot)) $; it coincides with the vanishing cohomology $H^\bullet(X)^{\mathrm{van}}:=\ker(j_*)$ and also by the projection formula with 
the orthogonal of the ambient cohomology   $H^\bullet(X)^{\mathrm{amb}}$ with respect to the intersection pairing. The following statement is similar to  \cite[Lemma 6.11]{kkpy}.

\begin{prop}
\label{prop_prim_action}
For any $\gamma\in H^\bullet(X)^{\mathrm{prim}}$,  we have $h\star \gamma=0$.
\end{prop}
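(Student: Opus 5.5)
Primitive cohomology of $X$ is concentrated in degree $4$: it is the $22$-dimensional middle cohomology orthogonal to $\sigma_2,\sigma_{11}$. So we take $\gamma\in H^4(X)^{\mathrm{prim}}$. By definition $h\cup\gamma=0$, so only the quantum corrections remain. By degree counting (index $2$, $\deg q = 2$ in complex codimension units), we get
$$h\star\gamma=\sum_{d\ge1}q^d\sum_k \langle h,\gamma,\tau_k\rangle_d\,\tau_k^\vee
= \sum_{d\ge 1} d\,q^d\sum_k\langle\gamma,\tau_k\rangle_d\,\tau_k^\vee ,$$
where the second equality is the divisor axiom. The two-point invariant $\langle\gamma,\tau\rangle_d$ has virtual dimension $4+2d-1=2d+3$. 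With $\gamma$ of complex degree $2$, it is nonzero only if $\deg\tau=2d+1\le 4$. Hence only $d=1$ with $\tau\in H^6(X)$ contributes, so $h\star\gamma=q\,\langle\gamma,\tau\rangle_1\,\tau^\vee$, where $H^6(X)$ is one-dimensional, spanned by the line class $\sigma_1^\vee$.

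It therefore suffices to show that $\langle\gamma,\sigma_1^\vee\rangle_1=0$. By the divisor axiom this equals $\langle \gamma,\sigma_1^\vee,h\rangle_1$, but I would rather compute it directly through the correspondence given by lines. Let $F$ be the Hilbert scheme of lines (smooth by Proposition \ref{smooth}) with universal line $P\to F$ and evaluation map $e:P\to X$. The class $\sigma_1^\vee=\frac12\sigma_3$ is ambient, so we represent it by a general Schubert cycle, and the invariant becomes $\int_X \gamma\cdot \beta$ with $\beta:=e_*p^*(p_*e^*\sigma_1^\vee)\in H^4(X)$. This $\beta$ is the class of the surface swept out by the lines meeting a general Schubert representative of $\sigma_3$.

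The key step is to prove that $\beta$ is ambient, since $\gamma$ is orthogonal to ambient classes. Plan: do everything in $G=G(2,5)$. Lines meeting a subvariety $Z\subset G$ form the incidence locus in the flag variety of lines $\mathrm{Fl}=F(1,3;5)$, and the set of lines contained in $X$ is the zero locus in $\mathrm{Fl}$ of a section of a vector bundle $E$ induced by $\omega_1$ and $\omega_2$. Then $[F]=c_{top}(E)$ in $\mathrm{Fl}$, and $\beta$ is $j^*$ of a class, or equivalently $j_*\beta$ only matters via $j^*$. More precisely, write $\beta=\pi_{X*}(\ldots)$ where everything is pulled back from homogeneous spaces, so $\beta$ lies in the image of $H^\bullet(G)\to H^\bullet(X)$ after applying the excess/refined Gysin formalism. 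The main obstacle is to make this rigorous. I would first try the cleaner monodromy argument instead. The class $\beta$ is defined for every smooth $X$ in the family, so it is a monodromy-invariant class in $H^4$. The monodromy group of the universal family of GM fourfolds acts on $H^4(X)^{\mathrm{prim}}$ through an arithmetic group with no nonzero invariants (it acts irreducibly, containing reflections in vanishing cycles). Hence the invariant part of $H^4(X)$ is the ambient part, so $\beta$ is ambient and $\langle\gamma,\sigma_1^\vee\rangle_1=\int\gamma\beta=0$.

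Equivalently, the same monodromy argument can be applied directly: the linear map $\gamma\mapsto\langle\gamma,\sigma_1^\vee\rangle_1$ is a monodromy-invariant functional on the irreducible nontrivial representation $H^4(X)^{\mathrm{prim}}$, so it vanishes. This gives $h\star\gamma=0$. For the generic smooth GM fourfold, monodromy irreducibility follows from Deligne's theory of Lefschetz pencils applied to the vanishing cohomology. The statement for all $X$ then follows by deformation invariance of Gromov-Witten invariants.
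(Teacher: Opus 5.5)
Your argument is correct and essentially the paper's. A degree count shows that the only invariants entering $h\star\gamma$ pair $\gamma$ with an ambient class; you sharpen this to the single invariant $\langle\gamma,\sigma_1^\vee\rangle_1$. Such invariants vanish because they define a monodromy-invariant functional on the irreducible, nontrivial monodromy representation $H^4(X)^{\mathrm{prim}}$, which is the paper's argument (your abandoned correspondence route is not needed).
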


\begin{proof}
It is well-known that Gromov-Witten invariants are monodromy-invariant. Moreover, the monodromy acts irreducibly on the primitive cohomology of hypersurfaces \cite[Theorem 3.27]{Vo}. For $\alpha,\beta\in H^\bullet(X)^{\mathrm{amb}}$, $d\in \mathbb{N}$, let us define  $$\psi_{\alpha,\beta}^{(d)}=\langle \alpha,\beta,\bullet \rangle_d : H^\bullet(X)^{\mathrm{prim}} \to \CC. $$
This is a monodromy invariant element inside the irreducible representation $(H^\bullet(X)^{\mathrm{prim}})^\vee$, so it has to vanish. 

Now, the product $h\star \gamma$ has degree three, while the degrees of $\gamma$ and $q$ are both even. Therefore, if $\langle h,\gamma,\beta\rangle_d$ is a Gromov-Witten invariant appearing in $h\star \gamma$, then $\deg(\beta^\vee)$ and $\deg(\beta)$ must be odd, which forces $\beta$ to be an ambient class. But then, by the above argument, $\langle h,\gamma,\beta\rangle_d=
\psi_{a,\beta}^{(d)}(\gamma)=0$.
\end{proof}

\section{Irrationality of the very general Gushel-Mukai fourfold}
The quantum multiplication by the hyperplane class is given, according to the previous computations, in the basis $\sigma_0=1$, $\sigma_1$, $\sigma_2$, $\sigma_{11}$, $\sigma_3$, $\sigma_{31}$, by the matrix
\begin{equation*}
\label{eq_mult_h}
    \begin{pmatrix}
 0&6q&0&0&24q^2&0 \\
 1&0&10q&6q&0&24q^2 \\
 0&1&0&0&4q&0 \\
 0&1&0&0&2q&0 \\
 0&0&3&2&0&6q \\
 0&0&0&0&1&0 
 \end{pmatrix}
\end{equation*}
 
 \medskip
 The characteristic polynomial is $X^2P(X^2)$, where
$P(T)=T^2-44qT-16q^2$ has roots $T_1q$ and $T_2q$ with 
$$T_1=22+10\sqrt{5}>0>T_2=22-10\sqrt{5}.$$
Applying the ideas of \cite{kkpy}, we can deduce:

\begin{theorem}
\label{thm_irrationality_GM}
The very general Gushel-Mukai fourfold is irrational.
\end{theorem}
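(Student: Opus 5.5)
We plan to apply the irrationality criterion of \cite[Proposition 5.30]{kkpy}, in the same way it is used for cubic fourfolds. Roughly, the criterion says this. Decompose the quantum cohomology of $X$ (with its $A$-model Hodge structure / F-bundle) into atoms, using the eigenvalue decomposition of the quantum multiplication by the hyperplane class $h$. If some atom cannot appear in the quantum cohomology of any blow-up center of dimension $\le 2$ (points, curves, surfaces), nor in that of $\PP^4$, then $X$ is not rational. This follows from the weak factorization theorem and the blow-up formula for quantum cohomology (Iritani). So the task is to exhibit such an atom.

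\emph{Step 1: the spectral decomposition of $h\star$.}
By Proposition \ref{prop_prim_action}, $h\star$ vanishes on $H^\bullet(X)^{\mathrm{prim}}$. For $X$ very general, $H^\bullet(X)=H^\bullet(X)^{\mathrm{amb}}\oplus H^4(X)^{\mathrm{prim}}$, and $h\star$ preserves $H^\bullet(X)^{\mathrm{amb}}$. On the ambient part, $h\star$ is given by the matrix of Theorem \ref{matrix}, whose characteristic polynomial is $X^2P(X^2)$. Its nonzero eigenvalues $\pm\sqrt{T_1q}$ and $\pm\sqrt{T_2q}$ are simple, and the ambient kernel is two-dimensional. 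The generalized eigenspace for $0$ is therefore $\ker(h\star)$, of dimension $2+22=24$. It consists of two ambient classes together with the primitive lattice, which carries the K3-type Hodge structure: $h^{3,1}=h^{1,3}=1$ and $h^{2,2}_{\mathrm{prim}}=20$. The four nonzero eigenvalues give rank-one pieces.

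\emph{Step 2: isolating the obstructing atom.}
The key atom is the $0$-eigenspace summand, or an indecomposable atom inside it containing $H^4(X)^{\mathrm{prim}}$. Following the cubic fourfold argument, we use the Hodge-theoretic invariant attached to an atom (its ``Hodge numbers'' in the sense of \cite{kkpy}, i.e.\ the non-commutative Hodge structure graded by $p-q$). This atom contains a class of Hodge type $(3,1)$, i.e.\ of $p-q=2$. The quantum cohomology of a blow-up center $Z$ of dimension $\le 2$ only contributes pieces with $|p-q|\le 2$ shifted by Tate twists; those pieces are of the form $H^\bullet(Z)(k)$. The center $\PP^4$ has only Hodge–Tate classes. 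We then need to check the specific numerical condition in \cite[Proposition 5.30]{kkpy}: an atom with $h^{p,q}\neq0$ for $|p-q|=2$ that does not come from a surface. The delicate case is a K3 surface, whose $H^2$ also has $(2,0)$ classes. For very general $X$ this is excluded because the atom also contains the two extra ambient classes. More decisively, the atom is indecomposable: the monodromy acts irreducibly on $H^4(X)^{\mathrm{prim}}$, and very general $X$ has no Hodge classes beyond the ambient ones by \cite[Corollary 4.6]{dim}. So the atom cannot be matched with the Hodge structure of a surface, whose transcendental part would have to be isomorphic to that of $X$ with the right rank and parity constraints.

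\emph{Main obstacle.}
The hardest point is verifying that the $0$-eigenspace atom does not split further and satisfies exactly the hypotheses of \cite[Proposition 5.30]{kkpy}. In the cubic case the eigenvalue $0$ carried a Jordan block; here it does not, so the two ambient kernel classes could in principle form separate atoms, leaving an atom isomorphic to the $H^2$ of a K3. I would first use Hodge generality to rule out the K3 match: a very general GM fourfold has a transcendental lattice of rank $22$ with no algebraic classes. A K3 surface whose $H^2$ is Hodge isomorphic to $H^4(X)^{\mathrm{prim}}$ would need Picard rank $0$, which is impossible for a projective K3. The simple nonzero eigenvalues give rank-one atoms which are harmless, and they are not needed for this step.
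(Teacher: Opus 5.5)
Your Step 1 is right, but Step 2 is missing the argument that makes \cite[Proposition 5.30]{kkpy} apply, and what you put in its place has a real gap. The observation that $H^{3,1}(X)$ sits in $p-q=2$ only excludes curves, points and $\PP^4$. Every surface with $p_g>0$ (general type, elliptic, abelian, \dots) could a priori carry an atom containing a $(2,0)$-class, and you give no reason why K3 surfaces are \emph{the} delicate case. The missing input is a statement that is uniform over all surfaces, extracted from the proof of \cite[Theorem 6.8]{kkpy}. Set $\rho_{\xi,\alpha}=\dim_\CC(E_{\xi,\alpha}\cap QH^\bullet(Y)^{\mathrm{Hdg}})$ and $\nu_{\xi,\alpha}=\dim_\CC(E_{\xi,\alpha}\cap H^{(2)}(Y))$, where $E_{\xi,\alpha}$ is a generalized eigenspace of $\kappa_\xi$ at a generic Hodge point $\xi$. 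Then every atom of $\PP^n$, of any surface, of a curve or of a point has $\nu=0$ or $\rho\ge 3$. So it suffices to exhibit an atom of $X$ with $\nu\neq 0$ and $\rho\le 2$.

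This is also where your ``main obstacle'' dissolves. The eigenspace decomposition of $\kappa_\xi$ for generic $\xi$ refines that of $\kappa_0=2h\star$. Hence the atom containing $H^{3,1}(X)$ lies in $E_0$, whose Hodge part is just $\langle\alpha,\beta\rangle$ for very general $X$ (all Hodge classes are ambient by \cite[Corollary 4.6]{dim}). So $\rho\le 2$ whether or not the two ambient classes split off, and no indecomposability is needed. Your monodromy argument would not give indecomposability anyway, since monodromy acts trivially on ambient classes.

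Your fallback compares the wrong objects. For a K3 surface $S$ one has $QH^\bullet(S)=H^\bullet(S)$ and $\kappa^S$ has a single eigenvalue, so its atom is all of $H^\bullet(S)$. That atom is $24$-dimensional, exactly like $E_0$, and has $2+\rho(S)\ge 3$ Hodge classes. In particular, ``the atom also contains two extra ambient classes'' does not distinguish your atom from a K3 atom, which contains $1$ and $\pt$. Nor does an atom isomorphism by itself give a Hodge isomorphism $H^4(X)^{\mathrm{prim}}\simeq H^2(S)$; producing one is exactly what the Jordan-block deformation along $t\sigma_2$ in the paper's last section is for.

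Your Picard-rank-$0$ remark is the K3 instance of the correct count $2<3$. Rephrase it as $\rho\le 2$ versus $\rho\ge 3$, combine it with the surface statement above, and you have the paper's proof.
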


\begin{proof}[Proof, following \cite{kkpy}]
We will apply the irrationality criterion of \cite[Proposition 5.30]{kkpy}. According to this criterion, if a Hodge atom appears in a rational fourfold $Z$,  then it has to appears either in $\PP^4$, a surface, a curve or a point. 

To a Hodge atom $\boldsymbol\alpha$ on a variety $Y$, and any generic $\xi \in H^\bullet(Y)^{\mathrm{Hdg}}$, one can associate an eigenvalue $\alpha$ of the quantum multiplication $\kappa_\xi$ by $\mathrm{Eu}_\xi$. Here, $\mathrm{Eu}_\xi$ is the Euler vector field in a deformation (in the big quantum cohomology of $Y$) of the small quantum cohomology along  $\xi$. Let $E_{\xi,\alpha}$ be the generalized eigenspace of $\kappa_\xi$ for this eigenvalue $\alpha$. One defines $$\rho_{\xi,\alpha}^Y:=\dim_\CC (E_{\xi,\alpha}\cap QH^\bullet(Y)^{\mathrm{Hdg}}) \quad\mathrm{and}\quad \nu_{\xi,\alpha}^Y:= \dim_\CC(E_{\xi,\alpha}\cap H^{(2)}(Y)),$$ where $H^{(2)}(Y)$ is the degree two Hochschild cohomology of $Y$. It is shown in the proof of \cite[Theorem 6.8]{kkpy} that, if $\boldsymbol\beta$ is a Hodge atom of a variety $Y$ which is either $\PP^n$, a surface, a curve or a point then either $\nu_{\xi,\beta}^Y=0$ or $\rho_{\xi,\beta}^Y\geq 3$. This implies that if a fourfold $Z$ is rational then, for any Hodge atom $\boldsymbol\alpha$ in $Z$, either $\nu_{\xi,\alpha}^Z=0$ or $\rho_{\xi,\alpha}^Z\geq 3$.

Now, the operator $\kappa_\xi$ is a deformation of $\kappa_0$. So the decomposition in generalized eigenspaces of $\kappa_\xi$ is finer than that of $\kappa_0$, in the sense that it specializes at  $\xi_0=0$  to a decomposition in which each factor is contained in a (unique) generalized eigenspace for $\kappa_0$. For $\xi=0$ we are just considering the small quantum cohomology, and the operator $\kappa_0$ is the (small quantum) multiplication by $c_1(Z)$. This leads to the following irrationality criterion
(which can be led back directly to the proof of \cite[Theorem 6.8]{kkpy}, see \cite[Definition 27]{jg} and \cite[Corollary 46]{jg}): 
\begin{quote}$(\clubsuit)$
{\it If $Z$ is a Fano fourfold with $H^{3,1}(Z)\neq 0$, such that the multiplicity of any eigenvalue for the action of $\kappa_0=c_1(Z)\star$ on $H^\bullet(X)^{\mathrm{Hdg}}$ is $\leq 2$, then $Z$ is irrational.}
\end{quote}
Indeed, if this is the case, any Hodge atom $\boldsymbol\alpha$ of $Z$ will satisfy $\rho_{\xi,\alpha}^Z\leq 2$ by the previous deformation argument. Moreover, since $H^{3,1}(Z)\neq 0$, there must exist a Hodge atom $\boldsymbol\alpha_0$ satisfying $\nu_{\xi,\alpha_0}^Z\neq 0$. However, such an atom $\alpha_0$ cannot appear as a Hodge atom of $\PP^4$, a surface, a curve or a point. 

\medskip
Consider now a (very general) Gushel-Mukai fourfold $X$, for which the Hodge classes coincide with the ambient classes (see \cite[Corollary 4.6]{dim}). The Euler vector field $Eu_0$ is just  $c_1(X)=2h$, so $\kappa_0=2h\star(\cdot)$ is twice the product by the hyperplane class in the small quantum cohomology ring. From Theorem \ref{matrix} and Proposition \ref{prop_prim_action}, we know that $\kappa_0$ has five generalized eigenspaces: four of dimension one, and the kernel $E_0$  whose dimension is $24$. Moreover, $E_0$ contains the nonzero subspace $H^{3,1}(X)\subset H^\bullet(X)^{\mathrm{prim}}$, of Hochschild degree two, and $$\dim_\CC (E_0\cap QH^\bullet(X)^{\mathrm{amb}})=2.$$
We can thus deduce from $(\clubsuit)$ that $X$ is irrational.
\end{proof}

\medskip\noindent {\it Remark.}  We can give a more compact proof by following more closely 
the ideas of  
\cite{jg}, whose notations we also follow (see \cite[Section 2]{jg}). 
In particular we consider the non-Archimidean field $F=\QQ((a^\QQ))$ and denote $S^*=F[b^{\pm 1}]$.  Consider again a Hodge general Gushel-Mukai fourfold $X$. Denote by $t_0,\dots,t_5$ the dual basis to the basis $\sigma_0=1$, $\sigma_1$, $\sigma_2$, $\sigma_{11}$, $\sigma_3$, $\sigma_{31}$ of $H^\bullet(X)^{\mathrm{amb}}$, and let $\hat{R}^*(X,\QQ)=\QQ[[q,t_0,\dots,t_5]]$.   
Let $\zeta\in D(0,1)$, the open unit disk in $F$ with respect to its absolute value function. Consider the $\QQ$-evaluation function (\cite[Definition 20]{jg})
$$
\mathrm{ev}:\left\{q,t_0,\dots t_5 \right\} \to S^*, 
$$
$$
q\mapsto \zeta^2 b^4, \quad t_k\mapsto 0 \quad \forall k.
$$
By \cite[Proposition 21]{jg}, this defines a $\QQ$-evaluation map 
$$\mathrm{ev}:\hat{R}^*(X,\QQ)=\QQ[[q,t_0,\dots,t_5]]\to S^*$$ 
 For $\tau=t_0\sigma_0+t_1\sigma_1+t_2\sigma_2+t_3\sigma_{11}+t_4\sigma_3+t_5\sigma_{31}$, let again $\kappa_\tau$ denote the multiplication operator by the Euler vector field $\mathrm{Eu}_\tau$ in the big quantum cohomology ring (\cite[Definitions 11 and 12]{jg}). 
 Then the evaluation $\mathrm{ev}(\kappa_\tau)$
is just the multiplication operator by $c_1(X)=2h$ in the small quantum cohomology ring. By the same arguments as in the previous proof of Theorem \ref{thm_irrationality_GM}, we deduce that property $\clubsuit_{\hat{R}^*(X,\tilde{\mathbb{Q}}_{\mathrm{ext}})}$ (\cite[Definition 27]{jg}) is not satisfied (see \cite{jg} for the definition of $\tilde{\mathbb{Q}}_{\mathrm{ext}}$), which implies by \cite[Remark 29]{jg} that property $\clubsuit_{\hat{R}^*(X,R)}$ is not satisfied either, for any number field $R$ containing $\tilde{\mathbb{Q}}_{\mathrm{ext}}$. Then by \cite[Corollary 46]{jg}, $X$ is irrational.

\medskip\noindent {\it Remark.} We have considered so far only ordinary Gushel-Mukai fourfolds, as opposed to special ones which are double covers of a codimension two linear section of $G(2,5)$, branched over a quadric
(those are called ‘‘of Gushel type" in \cite{im}).

Special Gushel-Mukai fourfolds can of course be deformed into ordinary ones, simply by considering the cone over the Grassmannian and sections by two hyperplanes and a quadric. Their ambient classes can be defined in the same way as those coming from $G(2,5)$, and the quantum products by the hyperplane class are given by the same formulas as in the ordinary case, since the Gromov-Witten invariants are deformation invariants. So the previous statement also holds for special Gushel-Mukai fourfolds. Note that by \cite[Corollary 4.6]{dim}, for a very general special Gushel-Mukai fourfold, it is also true that the only Hodge classes are the ambient ones. So the very general special Gushel-Mukai fourfold is irrational.

 \medskip It remains a very interesting question to characterize rational Gushel-Mukai fourfolds, either in terms of Hodge structure or derived category. 
 As explained in \cite[Remark 1.10]{ppz}, where the Hodge-theoretic version can be found, these two different points of view lead to two different conjectural conditions for rationality, the categorical version being \cite[Conjecture 3.12]{kp}. This is in contrast to the case of cubic fourfolds for which the corresponding conditions are known to be equivalent \cite{addington-thomas}. We will contribute to this question in the last section of this paper.

 \medskip\noindent {\it Remark.} Examples of rational Gushel-Mukai fourfolds are constructed in \cite{dim, hs, sta}.

\section{The ambient quantum cohomology}
\label{sec_ambient}
In this section we complete our computations of the quantum products of ambient classes. 
In order to get a presentation of the ambient quantum cohomology, we just need to compute the product $\sigma_2\star \sigma_2$, or equivalently $\sigma_{11}\star \sigma_2$ or $\sigma_{11}\star \sigma_{11}$. Indeed, since $h\star h=\sigma_2+\sigma_{11}+6q$, these three products can be deduced one from the other through products by the hyperplane class, that 
have been dealt with in the previous sections. 

Since $\sigma_{11}\cup \sigma_{11}=2\pt$, we can decompose 
$$
\sigma_{11}\star \sigma_{11} =2\pt + J^{(1)}_1 q\sigma_2^\vee +J^{(1)}_2 q\sigma_{11}^\vee + 
J^{(2)} q^2,
$$ 
in terms of the following three-pointed Gromov-Witten invariants: 
$$J^{(1)}_1=\langle \sigma_{11},\sigma_{11},\sigma_{2} \rangle_1, \quad J^{(1)}_2=\langle \sigma_{11},\sigma_{11},\sigma_{11} \rangle_1, \quad  J^{(2)}=\langle \sigma_{11},\sigma_{11},\pt \rangle_2.$$

\medskip\noindent
From the formulas 
$$\sigma_2\star h\star h = 6\pt+12I^{(1)}_2q\sigma_2^\vee+(4I^{(1)}_2+6I^{(1)}_3)q\sigma_{11}^\vee+(3J^{(2)}+I^{(1)}_1I^{(1)}_2)q^2, $$
$$\sigma_{11}\star h\star h = 4\pt+(4I^{(1)}_2+6I^{(1)}_3)q\sigma_2^\vee+8I^{(1)}_3 q\sigma_{11}^\vee+(2I^{(2)}+I^{(1)}_1I^{(1)}_3)q^2, $$

\medskip\noindent
we deduce that the other quantum products of degree two classes are
$$\sigma_{11}\star \sigma_2 =2\pt +(4I^{(1)}_2+6I^{(1)}_3-J^{(1)}_1-2I^{(1)}_1)q\sigma_2^\vee+\hspace*{3cm}$$
$$\hspace*{15mm}+(8I^{(1)}_3-J^{(1)}_2-2I^{(1)}_1)q\sigma_{11}^\vee+(2I^{(2)}-J^{(2)}+I^{(1)}_1I^{(1)}_3)q^2,$$
$$\sigma_2\star \sigma_2 =4\pt +(8I^{(1)}_2-6I^{(1)}_3+J^{(1)}_1-2I^{(1)}_1)q\sigma_2^\vee+\hspace*{3cm}$$
$$\hspace*{22mm}+(4I^{(1)}_2-2I^{(1)}_3+J^{(1)}_2)q\sigma_{11}^\vee+(I^{(2)}+J^{(2)}-I^{(1)}_1(I^{(1)}_2+I^{(1)}_3))q^2.$$

\medskip
As a result, we get two different expressions for the following Gromov-Witten invariants: 
$$\langle \sigma_{11},\sigma_{11},\sigma_{2}\rangle_1=J^{(1)}_1=8I^{(1)}_3-J^{(1)}_2-2I^{(1)}_1, $$
$$\langle \sigma_{11},\sigma_2,\sigma_{2}\rangle_1=
4I^{(1)}_2+6I^{(1)}_3-J^{(1)}_1-2I^{(1)}_1= 4I^{(1)}_2-2I^{(1)}_3+J^{(1)}_2.$$

\medskip
Both equalities imply the same relation, namely:

\begin{lemma}
$J^{(1)}_1+J^{(1)}_2=8I^{(1)}_3-2I^{(1)}_1=36.$
\end{lemma}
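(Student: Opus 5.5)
The plan is to make rigorous the two-expression comparison sketched just before the statement, using only associativity and commutativity of the quantum product together with the values of $I^{(1)}_1, I^{(1)}_2, I^{(1)}_3$ computed earlier. We compute $\sigma_{11}\star h\star h$ in two ways. On one side we use $h\star h=\sigma_2+\sigma_{11}+6q$, which is the formula $\sigma_1\star\sigma_1=\sigma_2+\sigma_{11}+I^{(1)}_1q$ with $I^{(1)}_1=6$. This gives
$$\sigma_{11}\star h\star h=\sigma_{11}\star\sigma_2+\sigma_{11}\star\sigma_{11}+I^{(1)}_1q\,\sigma_{11}.$$
On the other side we apply $h\star$ twice, starting from $h\star\sigma_{11}=2\sigma_3+I^{(1)}_3q\sigma_1$ and then using the formulas for $h\star\sigma_3$ and $h\star\sigma_1$.

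Next we compare the coefficient of $q\sigma_{11}^\vee$ on both sides. For this we need the coefficient of $\sigma_{11}^\vee$ in $\sigma_{11}\star\sigma_2$, which by definition of the quantum product is $q\langle\sigma_{11},\sigma_2,\sigma_{11}\rangle_1=J^{(1)}_1q$. In $\sigma_{11}\star\sigma_{11}$ the same coefficient is $J^{(1)}_2 q$. To handle the term $I^{(1)}_1q\sigma_{11}$, we expand $\sigma_{11}$ in the dual basis using the Gram matrix of the pairing on degree-two classes. From $\sigma_2^2=4\pt$ and $\sigma_2\sigma_{11}=\sigma_{11}^2=2\pt$ we get
$$\sigma_{11}=2\sigma_2^\vee+2\sigma_{11}^\vee .$$
So this term contributes $2I^{(1)}_1$ to the $\sigma_{11}^\vee$ coefficient. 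The first-order part of the left-hand side contributes $J^{(1)}_1+J^{(1)}_2+2I^{(1)}_1$.

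On the right-hand side, the $q^1$ contributions in degree four come from two sources. The first is $2h\star\sigma_3$, which contributes $2\cdot 2I^{(1)}_3\,\sigma_{11}^\vee$ from its $\sigma_{11}^\vee$ term. The second is $I^{(1)}_3q\,h\star\sigma_1$, whose classical part $\sigma_2+\sigma_{11}=h^2$ pairs with $\sigma_{11}$ to $4$, contributing $4I^{(1)}_3$. The total is $8I^{(1)}_3$, matching the formula displayed before the statement. Equating the two sides gives
$$J^{(1)}_1+J^{(1)}_2=8I^{(1)}_3-2I^{(1)}_1,$$
and substituting $I^{(1)}_3=6$ and $I^{(1)}_1=6$ yields $36$.

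As a consistency check, the same relation should also come out of the $\sigma_2\star h\star h$ expansion, which rearranges to the same identity, as the text asserts.

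The main obstacle is bookkeeping rather than any conceptual step. We must keep track of the dual basis correctly, since $\sigma_2^\vee$ and $\sigma_{11}^\vee$ are not Schubert classes, and make sure each degree-$q$ coefficient is attributed to the right invariant. I would therefore first verify the displayed formula for $\sigma_{11}\star h\star h$ coefficient by coefficient, pairing against $\sigma_{11}$ and $\sigma_2$, and only then read off the relation.
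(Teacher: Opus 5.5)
Your proposal is correct and is essentially the paper's argument. The paper also expands $\sigma_{11}\star h\star h$ both through $h\star h=\sigma_2+\sigma_{11}+6q$ and through repeated multiplication by $h$, and reads off the $q\sigma_{11}^\vee$ coefficient; it phrases this as extracting $\langle\sigma_{11},\sigma_2,\sigma_{11}\rangle_1$ from $\sigma_{11}\star\sigma_2$ and identifying it with $J^{(1)}_1$ by symmetry. Your bookkeeping checks out, in particular $\sigma_{11}=2\sigma_2^\vee+2\sigma_{11}^\vee$ and the total $8I^{(1)}_3$ on the right-hand side, giving $48-12=36$.
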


Our next task will be to compute $J^{(1)}_2=\langle \sigma_{11},\sigma_{11},\sigma_{11} \rangle_1$ geometrically. We will immediately deduce $J^{(1)}_1$.

\subsection*{Computation of $J^{(1)}_2$.} A general subvariety representing the class $\sigma_{11}$ is the set of planes in $X$ contained in a given four-dimensional subspace $A_4\subset V_5$. We choose three such subvarieties in general position, defined by $A_4,B_4,C_4\subset V_5$, and we look for the number of  lines in $X$ passing through these three subvarieties. 

A line in $G(2,V_5)$ is defined by a flag $L_1\subset L_3\subset V_5$. That it meets our three subvarieties means that there exist planes $A_2\subset A_4, B_2\subset B_4, C_2\subset C_4$ such that $L_1\subset A_2, B_2, C_2\subset L_3$. In particular $L_1\subset W_2:=A_4\cap B_4\cap C_4$ and $A_2\subset L_3\cap A_4$, $B_2\subset L_3\cap B_4$,  $C_2\subset L_3\cap C_4$; moreover, for a general line these three inclusions will be equalities. 

We can then use for parameter space $\Sigma$ the space of pairs $(L_1\subset L_3)$ such that 
$L_1\subset W_2$;  
this is a $G(2,4)$-bundle over $\PP(W_2)$, more precisely, the bundle  $G(2,V_5/\cO(-1))$ over $\PP^1\simeq \PP(W_2)$. 
We need to impose that the line defined by the pair $(L_1\subset L_3)$ is contained in  $X$, that is, that $\omega_1$ and $\omega_2$ vanish on $L_1\wedge L_3\subset \wedge^2V_5$. 
This gives $2+3=5$ conditions on the five-dimensional parameter space $\Sigma$, yielding as expected a finite number of lines. For $\omega_1$ and $\omega_2$ generic, this number can be computed as 
$$J^{(1)}_2=\int_\Sigma c_2((L_1\wedge L_3)^\vee)c_3(S^2(L_1\wedge L_3)^\vee).$$
Let $h=-c_1(L_1)$ and $1+H+\alpha=c((L_3/L_1)^\vee)$, where $\alpha$ is the second Chern class. Since $L_1\wedge L_3\simeq L_1\otimes (L_3/L_1)$ as vector bundles on $\Sigma$, we can compute 
$c_2((L_1\wedge L_3)^\vee)= \alpha+hH$ and
$c_3(S^2(L_1\wedge L_3)^\vee)=8hH^2+16\alpha h+4\alpha H$. Hence
$$
J^{(1)}_2=\int_\Sigma (\alpha+hH)(4hH^2+8\alpha h+4\alpha H)= \int_\Sigma h(8H^2\alpha+8\alpha^2) +\int_\Sigma 4\alpha^2H=
$$
$$
=\int_{G(2,4)}(8H^2\alpha+8\alpha^2) + \int_\Sigma 4\alpha^2H = 8+8+4 \int_\Sigma \alpha^2H.
$$
Now, $\Sigma=G(2,V_5/\cO(-1))$, and $V_5/\cO(-1)$ is an extension of $\cO(1)$ with $\cO^3$. So the cohomology ring of $\Sigma$ is $$H^*(\Sigma,\QQ)=\QQ[h,H,\alpha,H',\alpha']/(h^2,1-h-(1+\alpha+H)(1+H'+\alpha')),$$
where $c((V_5/L_3)^\vee)=1+H'+\alpha'$ and $\alpha'$ is the second Chern class. 
The relation $1-h=(1+H+\alpha)(1+H'+\alpha')$
can be deduced from the identity of Chern classes
$$
c((V_5/\cO(-1))^\vee)=c((V_5/L_3))^\vee) c((L_3/L_1)^\vee),
$$
which follows 
from the dual tautological exact sequence $$0\to  (V_5/L_3)^\vee \to (V_5/\cO(-1))^\vee \to (L_3/L_1)^\vee \to 0.$$
We deduce that $\alpha^2H=-\alpha^2h=-1$, and therefore $J^{(1)}_2=16-4=12$ and $J^{(1)}_1=24$.

\subsection*{Computation of $J^{(2)}$}
For this last computation, we will use once again the symmetry properties of the Gromov-Witten invariants. Indeed, we can compute in terms of $J^{(2)}$ the quantum products of any two ambient classes. We find in particular that 
$$\pt\star \sigma_2=14q^2\sigma_2+12q^2\sigma_{11}+(120-J^{(2)})q^3, $$
$$\pt\star \sigma_{11}=10q^2\sigma_2+6q^2\sigma_{11}+(24+J^{(2)})q^3. $$
As a consequence, the Gromov-Witten invariant
$$\langle \pt, \sigma_{11},\sigma_{11}\rangle_2=32.$$
But this is $J^{(2)}$ by definition! 

\medskip\noindent {\it Remark.} If we don't plug in the values of the coefficients we computed before, the symmetries of the Gromov-Witten invariants yield the formula  
\begin{equation*}\label{rel2}
J^{(2)}=I^{(1)}_1(2I^{(1)}_3-I^{(1)}_2)+
\frac{1}{5}(2I^{(1)}_2-3I^{(1)}_3)(2I^{(1)}_2+9I^{(1)}_3-\frac{5}{2}J^{(1)}_1)+\frac{3}{5}I^{(2)}.
\end{equation*}

\subsection*{Conclusion}
We can now easily complete the multiplication table:
$$\begin{array}{rcl}
\sigma_2\star\sigma_2 & = & 2\sigma_{31}+8q\sigma_2+12q\sigma_{11}+80q^2, \\
\sigma_2\star\sigma_{11} & = & \sigma_{31}+8q\sigma_{2}+4q\sigma_{11}+52q^2, \\
\sigma_{11}\star \sigma_{11}& = & \sigma_{31}+6q\sigma_2+32q^2, \\
\sigma_3\star\sigma_2 & = & 10q\sigma_3+60q^2\sigma_1, \\
\sigma_3\star\sigma_{11} & = & 6q\sigma_3+40q^2\sigma_1, \\
\sigma_3\star\sigma_{3} & = & 20q^2\sigma_2+20q^2\sigma_{11}+120q^2, \\
\sigma_{31}\star\sigma_2 & = & 28q^2\sigma_2+24q^2\sigma_{11}+176q^3, \\
\sigma_{31}\star\sigma_{11} & = & 20q^2\sigma_2+12q^2\sigma_{11}+112q^3, \\
\sigma_{31}\star\sigma_3 & = & 24q^2\sigma_3+120q^3\sigma_1, \\
\sigma_{31}\star\sigma_{31} & = & 64q^3\sigma_2+48q^3\sigma_{11}+368q^4. 
\end{array}$$ 

\medskip
We   deduce the following relations:
$$
R_1:=5h\sigma_{11}-2h^3+14qh,
$$
$$
R_2:=5\sigma_{11}^2+20q\sigma_{11}-h^4+12qh^2+20q^2,
$$
$$
R_3:=h^5-44qh^3-16q^2h.
$$

\medskip
This yields a presentation of the quantum ambient cohomology ring of a Gushel-Mukai fourfold:

\begin{theorem}\label{presentation}
$QH^\bullet(X)^{\mathrm{amb}}=\QQ[h,\sigma_{11},q]/(R_1,R_2,R_3)$.
\end{theorem}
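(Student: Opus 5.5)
We must prove that the ring map $\phi:\QQ[h,\sigma_{11},q]\to QH^\bullet(X)^{\mathrm{amb}}$ sending $h\mapsto\sigma_1$ and $\sigma_{11}\mapsto\sigma_{11}$ is surjective with kernel $(R_1,R_2,R_3)$. Here $QH^\bullet(X)^{\mathrm{amb}}$ is understood as the free $\QQ[q]$-module on the six ambient classes, with the quantum product computed above.

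\emph{Surjectivity.} From $h\star h=\sigma_2+\sigma_{11}+6q$ we obtain $\sigma_2=h^2-\sigma_{11}-6q$. From the matrix of Theorem~\ref{matrix}, $h\star\sigma_{11}=2\sigma_3+6q\sigma_1$, which gives $\sigma_3$. Finally $h\star\sigma_3=\sigma_{31}+\dots$ gives $\sigma_{31}$. Each ambient class is therefore a polynomial in $h$, $\sigma_{11}$ and $q$, and $\phi$ is onto.

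\emph{The relations lie in the kernel.} We check that $R_1,R_2,R_3$ map to zero using the multiplication table and the matrix of $h\star$.
\begin{itemize}
\item $R_3$: this is the Cayley--Hamilton type identity $h^5-44qh^3-16q^2h=0$. It follows from the characteristic polynomial $X^2P(X^2)$ together with the fact that $h\star$ is diagonalizable for generic $q$: the kernel is two-dimensional and the four nonzero eigenvalues are simple. Hence the minimal polynomial is $X\,P(X^2)$. Applied to $1$, this gives $h^5-44qh^3-16q^2h=0$.
\item $R_1$ and $R_2$: these are direct substitutions of the products already tabulated.
\end{itemize}
So $\phi$ factors through $A:=\QQ[h,\sigma_{11},q]/(R_1,R_2,R_3)$.

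\emph{Injectivity.} It remains to show that $A$ is generated as a $\QQ[q]$-module by six elements, namely $1,h,h^2,h^3,h^4,\sigma_{11}$. Once this holds, the induced map $A\to QH^\bullet(X)^{\mathrm{amb}}$ is a surjection from a module generated by six elements onto a free $\QQ[q]$-module of rank six. A surjection of $\QQ[q]$-modules from one generated by $6$ elements onto a free module of rank $6$ is an isomorphism, since the six generators must map to a basis.

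To get the generation statement, we use:
\begin{itemize}
\item $R_1$ to rewrite $h\sigma_{11}$ as a polynomial in $h$ and $q$;
\item $R_2$ to rewrite $\sigma_{11}^2$ in terms of $\sigma_{11}$, $h^4$, $h^2$ and $q$;
\item $R_3$ to reduce $h^5$ to lower powers of $h$.
\end{itemize}
Any monomial $h^a\sigma_{11}^b$ then reduces to the span of $1,h,h^2,h^3,h^4,\sigma_{11}$ over $\QQ[q]$. To see this, induct on $b$ and then on $a$, using $R_1$ to kill mixed monomials and $R_3$ to bound the powers of $h$.

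The main obstacle is purely bookkeeping. One must verify $R_1$ and $R_2$ exactly against the multiplication table; a small numerical error would break the relation. One must also check that the reduction process terminates, which holds because every rewriting strictly lowers the degree in $(h,\sigma_{11})$ or the power of $h$. Beyond that, the argument is linear algebra over $\QQ[q]$.
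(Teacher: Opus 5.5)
Your proof is correct and follows the paper's argument. In both, the relations $R_1,R_2,R_3$ hold by the computed products, and they reduce everything to the $\QQ[q]$-span of $1,h,h^2,h^3,h^4,\sigma_{11}$, which maps onto, and hence isomorphically to, the free rank-six ambient module; you simply make explicit the verification of the relations and the module-theoretic final step that the paper compresses into an appeal to linear independence in classical cohomology.
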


\begin{proof}
The relations directly come from the quantum products we have computed so far, expressed in terms of $h,\sigma_{11},q$. We claim they generate the ideal of relations. 
Indeed, $R_1$ and $R_2$ express $h\star \sigma_{11}$ and $\sigma_{11}\star\sigma_{11}$ in terms of $\sigma_{11}$, $h,h^2,h^3,h^4$ and $q$. There is no relation between $\sigma_{11},h,h^2,h^3,h^4$ because they are $\CC$-linearly independent in the classical cohomology. They also $\CC$-linearly generate the classical cohomology, so $h^5$ has to depend on these, and this dependence is expressed in $R_3$.
\end{proof}

\section{On rational Gushel-Mukai fourfolds}

From Theorem \ref{matrix} and Proposition \ref{prop_prim_action}, we deduce: 

\begin{lemma}
The kernel $E_0$ of the multiplication by $h$ in the small quantum cohomology ring is $E_0=\langle \alpha,\beta \rangle \oplus H^\bullet(X)^{\mathrm{prim}}$, where 
$$\alpha=2\sigma_2-3\sigma_{11}-2q, \qquad \beta=2\pt-2q\sigma_2-4q^2.$$
\end{lemma}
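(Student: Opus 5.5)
The kernel of $h\star$ on the full quantum cohomology $H^\bullet(X)\otimes\QQ[q]$ (or after specializing $q$ to a generic nonzero value) splits according to the orthogonal decomposition $H^\bullet(X)=H^\bullet(X)^{\mathrm{amb}}\oplus H^\bullet(X)^{\mathrm{prim}}$. Both summands are stable under $h\star$: the ambient part because it is a subalgebra for the quantum product, the primitive part by Proposition \ref{prop_prim_action}, which gives $h\star\gamma=0$ for every primitive $\gamma$. So the plan is to check that $\ker(h\star)=\ker(h\star|_{\mathrm{amb}})\oplus H^\bullet(X)^{\mathrm{prim}}$, and then to identify $\ker(h\star|_{\mathrm{amb}})$ with $\langle\alpha,\beta\rangle$.

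First I would show that the whole primitive part lies in the kernel, which is exactly Proposition \ref{prop_prim_action}. For a general element $a+\gamma$ with $a$ ambient and $\gamma$ primitive, we get $h\star(a+\gamma)=h\star a$. Hence $a+\gamma$ is in the kernel if and only if $a$ is, and the kernel splits as claimed.

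Next I would treat the ambient part. By Theorem \ref{matrix}, for $q$ generic the matrix has a two-dimensional kernel, so it suffices to check that $\alpha$ and $\beta$ are killed by $h\star$ and are linearly independent. Independence is clear: $\alpha$ has nonzero $\sigma_{11}$-component and $\beta$ has none, while $\beta$ has a nonzero $\pt$-component. For the vanishing, I would use the multiplication rules of Section 3 with $I^{(1)}_1=6$, $I^{(1)}_2=10$, $I^{(1)}_3=6$, $I^{(2)}=12$:
\[
h\star\sigma_2=3\sigma_3+10q\sigma_1,\qquad h\star\sigma_{11}=2\sigma_3+6q\sigma_1,\qquad h\star 1=\sigma_1 .
\]
These give
\[
h\star\alpha=(6-6)\sigma_3+(20-18-2)q\sigma_1=0 .
\]
For $\beta$, recall that $\pt=\tfrac12\sigma_{31}$ and $h\star\sigma_{31}=6q\sigma_3+24q^2\sigma_1$. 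Then
\[
h\star\beta=6q\sigma_3+24q^2\sigma_1-2q(3\sigma_3+10q\sigma_1)-4q^2\sigma_1=0 .
\]

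This is a direct verification with no real obstacle. The only point needing care is to read the kernel over the appropriate base, either $\QQ(q)$ or $\QQ$ with $q$ specialized to a generic nonzero value, so that the rank statement from Theorem \ref{matrix} applies and $\ker(h\star|_{\mathrm{amb}})$ is exactly two-dimensional.
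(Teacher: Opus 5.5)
Your proposal is correct and takes the same approach as the paper, which deduces the lemma directly from Theorem~\ref{matrix} (a two-dimensional kernel on ambient classes) and Proposition~\ref{prop_prim_action} (primitive classes are killed by $h\star$). Your explicit checks that $h\star\alpha=h\star\beta=0$, and that $\alpha$ and $\beta$ are linearly independent, fill in the verification the paper leaves to the reader; the computations are right.
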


We will denote $E_0^{\mathrm{amb}}:= E_0 \cap H^\bullet(X)^{\mathrm{amb}}=\langle \alpha, \beta\rangle$. 

\medskip
We will consider a particular deformation of the small quantum cohomology ring inside the big quantum cohomology ring, along  the degree $(-1)$ variable $t$ dual to $\sigma_2$. In other words, we consider $(H^\bullet(X)[q][[t]], \star_t)$, where:
$$
\sigma_a\star_t \sigma_b=\sigma_a\cup \sigma_b + \sum_{\sigma_c,d,e}\langle \sigma_a,\sigma_b,\sigma_c^\vee,(\sigma_2)^e \rangle_d \frac{q^d t^e}{e!} \sigma_c.
$$
The Euler field in this deformation is  $\mathrm{Eu}_{t\sigma_2}=2h-t\sigma_2$. 
Let us compute  the matrix of the multiplication operator by this Euler field, at first order in $t$. 

\begin{lemma}
\label{lem_comp_def}
For $\alpha\in H^\bullet(X)$ and  $d\geq 1$, the coefficient of $q^d t$ in $\mathrm{Eu}_{t\sigma_2}\star_t \alpha$ is 
$$
\sum_{\sigma_a} (2d-1)\langle \sigma_2, \alpha , \sigma_a^\vee \rangle_d \sigma_a.
$$
\end{lemma}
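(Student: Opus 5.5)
We expand $\mathrm{Eu}_{t\sigma_2}\star_t\alpha=2h\star_t\alpha-t\,\sigma_2\star_t\alpha$ using the definition of $\star_t$, and keep only the terms that are linear in $t$ and carry $q^d$ with $d\ge 1$. By linearity we may take $\alpha$ to be a basis element.

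\emph{First term.} The coefficient of $q^dt$ in $2h\star_t\alpha$ comes from the summand with $e=1$, namely
$$2\sum_{\sigma_a}\langle h,\alpha,\sigma_a^\vee,\sigma_2\rangle_d\,\sigma_a .$$
Apply the divisor axiom to the insertion $h$. It is stated in the paper for arbitrary numbers of points and remains valid here because $d\ge1$: the invariant is stable and the term with $d=0$ is excluded. This gives $\langle h,\alpha,\sigma_a^\vee,\sigma_2\rangle_d=d\,\langle \alpha,\sigma_a^\vee,\sigma_2\rangle_d$. Hence the contribution is
$$2d\sum_{\sigma_a}\langle\sigma_2,\alpha,\sigma_a^\vee\rangle_d\,\sigma_a .$$

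\emph{Second term.} The factor $t$ is already present, so we need the $t^0$ part of $\sigma_2\star_t\alpha$, that is, the small quantum product $\sigma_2\star\alpha$. Its $q^d$ coefficient for $d\ge1$ is
$$\sum_{\sigma_a}\langle\sigma_2,\alpha,\sigma_a^\vee\rangle_d\,\sigma_a .$$
The classical cup product $\sigma_2\cup\alpha$ contributes only to $q^0$, so it does not enter. This term therefore contributes
$$-\sum_{\sigma_a}\langle\sigma_2,\alpha,\sigma_a^\vee\rangle_d\,\sigma_a .$$

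Adding the two contributions gives the coefficient $(2d-1)\langle\sigma_2,\alpha,\sigma_a^\vee\rangle_d$, as claimed.

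The only point needing care is bookkeeping in $\sigma_2\star_t\alpha$. If we kept its $t^1$ part, that would multiply the prefactor $t$ to give order $t^2$, so it drops out. The formula therefore involves only three-pointed invariants, which are the ones computed in the previous sections.
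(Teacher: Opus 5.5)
Your proof is correct and follows the same route as the paper. You expand $\mathrm{Eu}_{t\sigma_2}=2h-t\sigma_2$, apply the divisor axiom to the $e=1$ term of $2h\star_t\alpha$ to get the factor $2d$, and read off the $-1$ from the $t^0$ (small quantum) part of $\sigma_2\star_t\alpha$. This simply spells out the paper's one-line bookkeeping.
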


\begin{proof}
The multiplication by $2h$ plus the divisor axiom gives the factor $2d$, while the factor $-1$ comes from the multiplication by $-t\sigma_2$.
\end{proof}

From the formulas we obtained for the quantum products by $h$ (Theorem \ref{matrix}) and $\sigma_2$ (``Conclusion'' of Sec. \ref{sec_ambient}), we deduce that the quantum multiplication by $\mathrm{Eu}_{t\sigma_2}$ is given, in the basis $1,\sigma_1,\sigma_2,\sigma_{11},\sigma_3,\sigma_{31}=2\pt$, and at first order in $t$, by the matrix:

$$\begin{pmatrix}
 0&12q&240q^2t&156 q^2t&48 q^2& 880q^3t \\
 2&10qt&20q&12q&180q^2t&48q^2 \\
 -t&2&8qt&8qt&8q&84q^2t \\
 0&2&12qt&4qt&4q& 72q^2t \\
 0&-3t&6&4&10qt&12q \\
 0&0&-2t&-t&2&0 
 \end{pmatrix}.$$

 \begin{lemma}
 \label{lem_Jordan_block}
Let  $\beta(t):=2\pt-2q\sigma_2-4q^2-16q^2t\sigma_1 -4qt\sigma_3$. Then
 $$\mathrm{Eu}_{t\sigma_2}\star_t \alpha = -4qt\alpha-t\beta(t)+O(t^2),$$
 $$\mathrm{Eu}_{t\sigma_2}\star_t \beta(t) = -4qt\beta(t)+O(t^2).$$
 \end{lemma}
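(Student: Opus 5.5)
The plan is a direct verification with the first-order matrix of $\mathrm{Eu}_{t\sigma_2}\star_t$ displayed just above. I will write that matrix as $M=M_0+tM_1+O(t^2)$, where $M_0$ is twice the matrix of Theorem \ref{matrix} and $M_1$ collects the coefficients of $t$ coming from Lemma \ref{lem_comp_def}. I work in the basis $1,\sigma_1,\sigma_2,\sigma_{11},\sigma_3,\sigma_{31}=2\pt$. In this basis
$$\alpha=(-2q,0,2,-3,0,0), \qquad \beta(t)=\beta+t\beta_1,$$
where $\beta=(-4q^2,0,-2q,0,0,1)$ and $\beta_1=(0,-16q^2,0,0,-4q,0)$.

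The zeroth order is already settled: the Lemma describing $E_0$ gives $M_0\alpha=0$ and $M_0\beta=0$, which is why both right-hand sides vanish at $t=0$. So only the coefficients of $t$ need checking.

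For $\alpha$, the coefficient of $t$ in $M\alpha$ is $M_1\alpha$. Multiplying the $t$-parts of the rows by $\alpha$, I expect $M_1\alpha=(12q^2,0,-6q,12q,0,-1)$. For instance the first row gives $240\cdot 2-156\cdot 3=12$ (times $q^2$), and the last row gives $-2\cdot 2+3=-1$. On the other side, $-4q\alpha-\beta=(8q^2,0,-8q,12q,0,0)+(4q^2,0,2q,0,0,-1)$, which gives the same vector. Since $t\beta(t)=t\beta+O(t^2)$, this proves the first identity.

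For $\beta(t)$, the coefficient of $t$ in $M\beta(t)$ is $M_1\beta+M_0\beta_1$, and it must equal $-4q\beta$. This is the only step requiring some care. The correction $\beta_1$ is precisely what makes the identity hold: $M_1\beta$ alone is not proportional to $\beta$, and $\beta_1$ is chosen so that $M_0\beta_1$ compensates the discrepancy. Note that $\beta_1$ is only determined modulo $\ker M_0$.

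I will compute both pieces entrywise. $M_1\beta$ comes from the $t$-parts of the rows applied to $\beta$. $M_0\beta_1$ comes from the $t$-free columns $2$ and $5$ of $M$, scaled by $-16q^2$ and $-4q$ respectively. Summing, the result should be $(16q^4,0,8q^2,0,0,-4q)=-4q\beta$. As a spot check on the last entry, $M_1\beta$ contributes $(-2)(-2q)=4q$ and $M_0\beta_1$ contributes $2\cdot(-4q)=-8q$, giving $-4q$ as required. The remaining entries are routine arithmetic of the same kind and complete the proof of the second identity.
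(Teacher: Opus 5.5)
Your proposal is correct and takes the same approach as the paper, which states the lemma without a separate proof as a direct check against the displayed first-order matrix; carrying out your entrywise computation does give $M_1\alpha=-4q\alpha-\beta$ and $M_1\beta+M_0\beta_1=(16q^3,0,8q^2,0,0,-4q)=-4q\beta$. One slip: the first entry of that last vector is $(-4q)(-4q^2)=16q^3$, not $16q^4$, as the degree count also requires.
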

 
 
 \begin{lemma}
 \label{lem_eigenvalues_primitive}
 For any $\gamma\in H^\bullet(X)^{\mathrm{prim}}$,  we have  $$\mathrm{Eu}_{t\sigma_2}\star_t \gamma=(-4qt+O(t^2))\gamma.$$
 \end{lemma}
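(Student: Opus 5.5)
We expand $\mathrm{Eu}_{t\sigma_2}\star_t\gamma=(2h-t\sigma_2)\star_t\gamma$ to first order in $t$ and treat the order-zero term and the coefficient of $t$ separately.

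\textbf{Order zero in $t$.} At $t=0$ the operator is $2h\star\gamma$ in the small quantum product. By Proposition \ref{prop_prim_action} this vanishes for every $\gamma\in H^\bullet(X)^{\mathrm{prim}}$.

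\textbf{The coefficient of $t$.} There are two contributions. The first is $-\sigma_2\star\gamma$, the small quantum product. The second is the first-order term of $2h\star_t\gamma$, i.e. the four-point invariants $\langle h,\gamma,\sigma_c^\vee,\sigma_2\rangle_d$; by the divisor axiom these equal $d\langle\gamma,\sigma_c^\vee,\sigma_2\rangle_d$. Lemma \ref{lem_comp_def} packages both: the $q^dt$ coefficient is $\sum_a(2d-1)\langle\sigma_2,\gamma,\sigma_a^\vee\rangle_d\,\sigma_a$ for $d\ge 1$. At $d=0$ it is the classical term $-\sigma_2\cup\gamma$, plus a $2h\cup$ contribution that is zero since three-point degree-zero invariants are just cup products.

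So we must compute the three-point invariants $\langle\sigma_2,\gamma,\sigma_a^\vee\rangle_d$ for all basis classes $\sigma_a$, including primitive ones.

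\textbf{Ambient $\sigma_a$.} If $\sigma_a^\vee$ is ambient, then $\langle\sigma_2,\sigma_a^\vee,\bullet\rangle_d$ is a monodromy-invariant functional on the irreducible representation $H^\bullet(X)^{\mathrm{prim}}$. Exactly as in the proof of Proposition \ref{prop_prim_action}, it vanishes. Hence the only surviving terms have $\sigma_a$ primitive.

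\textbf{Primitive $\sigma_a$.} The bilinear form $(\gamma,\gamma')\mapsto\langle\sigma_2,\gamma,\gamma'\rangle_d$ on the primitive part is monodromy invariant. The monodromy group is large: it is the full orthogonal group of the primitive lattice, or at least Zariski dense in it, as in the hypersurface case \cite{Vo}. Since the representation is absolutely irreducible, Schur's lemma forces this form to be a scalar multiple $\lambda_d$ of the intersection pairing. Therefore the operator $\gamma\mapsto\sum_d(2d-1)q^d\langle\sigma_2,\gamma,\bullet^\vee\rangle_d$ is scalar on $H^\bullet(X)^{\mathrm{prim}}$. For $d=0$, the classical product $\sigma_2\cup\gamma$ lands in degree $6$, where all classes are ambient, so it vanishes for $\gamma$ primitive of degree $4$.

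\textbf{Degree count.} This restricts which $d$ can contribute. The product $\sigma_2\star\gamma$ has degree $8$. Its component along a primitive class $\gamma'$, which has degree $4$, carries $q^d$ with $2d=4$, so only $d=1$ (after $q$ has degree $4$) matters. Hence the coefficient of $t$ is $\lambda\, q\,\gamma$, where $\lambda$ is a single number determined by $\langle\sigma_2,\gamma,\gamma'\rangle_1=\lambda\int\gamma\gamma'$.

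\textbf{Determining the scalar (main obstacle).} We expect $\lambda=-4$. A direct enumeration of lines meeting $\sigma_2$ and two vanishing cycles would be hard, so instead we use the associativity/WDVV relations and trace identities. First compute the trace of $\sigma_2\star$ at order $q$ on the full cohomology. The ambient part is known from the Conclusion table of Sec.~\ref{sec_ambient}. The trace is then expressible through invariants that WDVV relates to ambient ones, for instance via the supertrace formula $\mathrm{tr}(\sigma_2\star)=\sum\langle\sigma_2,e_i,e_i^\vee\rangle$ and associativity with $h$.

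An alternative is to use the structure of the Jordan block in Lemma \ref{lem_Jordan_block}. If the eigenvalue of the whole generalized kernel $E_0$ deforms uniformly to $-4qt$, consistency forces the primitive part to match. The most robust route is WDVV applied to $(h,h,\gamma,\gamma')$: it expresses $\langle\sigma_2,\gamma,\gamma'\rangle_1$ through $h\star h=\sigma_2+\sigma_{11}+6q$ together with $h\star\gamma=0$. This gives a linear relation between the $\sigma_2$- and $\sigma_{11}$-scalars. A second relation comes from WDVV with $(h,\sigma_2,\gamma,\gamma')$ or a grading argument, and together these pin down $\lambda_1$. Since $(2d-1)=1$ at $d=1$ and the sign is already absorbed, this yields $-4q$.
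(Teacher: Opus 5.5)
Your reduction is sound and matches the first half of the paper's proof. The order-zero term vanishes by Proposition~\ref{prop_prim_action}, monodromy kills the components along ambient classes, and Schur's lemma makes the primitive part act by a scalar. By degree only $d=1$ survives, so the coefficient of $t$ is $\sigma_2\star\gamma=\lambda q\gamma$ for a single number $\lambda$. (Minor slip: $\sigma_2\cup\gamma$ lies in $H^8$, not in degree $6$; it vanishes because $\int_X\sigma_2\gamma=0$.)

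The gap is the last step, which is the actual content of the lemma: you never derive $\lambda=-4$.
\begin{itemize}
\item The trace identity is circular, since the full trace of $\sigma_2\star$ is not known independently of the primitive contribution $22\lambda$.
\item The Jordan-block ``consistency'' assumes that all of $E_0$ deforms with one eigenvalue, which is precisely what must be proved.
\item The second WDVV relation you name is vacuous: $(h\star\sigma_2)\star\gamma=(3\sigma_3+10qh)\star\gamma=0$ identically. This is because $h\star\gamma=0$ and $\sigma_3\star\gamma=0$ (ambient components die by monodromy, and primitive ones would need $2d=3$).
\end{itemize}
More generally, relations obtained by multiplying by $h$ only give $\lambda+\mu=-6$, where $\sigma_{11}\star\gamma=\mu q\gamma$, or they introduce new unknowns. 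No linear system of this kind closes.

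The missing idea is a quadratic relation not involving $h$. Ambient classes act on $H^\bullet(X)^{\mathrm{prim}}$ by scalars, and associativity makes this a ring homomorphism $\chi$ from the ambient quantum ring with $\chi(h)=0$. From $(h\star\sigma_3)\star\gamma=0$ and $h\star\sigma_3=\sigma_{31}+4q\sigma_2+2q\sigma_{11}+24q^2$ you get $\chi(\sigma_{31})=-(4\lambda+2\mu+24)q^2$. Then $(\sigma_{11}\star\sigma_{11})\star\gamma=\sigma_{11}\star(\sigma_{11}\star\gamma)$, with $\sigma_{11}\star\sigma_{11}=\sigma_{31}+6q\sigma_2+32q^2$, gives $\mu^2=2\lambda-2\mu+8$. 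Combined with $\lambda+\mu=-6$ this yields $(\mu+2)^2=0$, so $\mu=-2$ and $\lambda=-4$. Equivalently, $R_2$ specialised at $h=0$ is $5(\sigma_{11}+2q)^2$, and the double root makes the value unique.

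Completed this way, your route is genuinely different from the paper's. The paper fixes the scalar through the atom formalism: the atom carrying $H^{3,1}$ must have $\rho\ge 2$ because Noether--Lefschetz divisors of rational Gushel--Mukai fourfolds exist, and $-4qt$ is the only ambient eigenvalue of multiplicity at least two. The associativity argument uses only the multiplication table of Section~\ref{sec_ambient}. It dispenses with both the rationality criterion and the existence of rational examples.
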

 
 \begin{proof}
 The product $\mathrm{Eu}_{t\sigma_2}\star_t \gamma$ is computed as a sum in which the only Gromov-Witten invariants that appear are of the form $\langle \gamma,\sigma_2,\ldots ,\sigma_2,\beta\rangle_d$ for $d\geq 0$, $\beta$ a cohomology class of the right degree, and a certain number of appearances of $\sigma_2$. As in the proof of Proposition \ref{prop_prim_action}, if $\beta$ is an ambient class, the monodromy invariance of Gromov-Witten invariants implies that $\langle \gamma,\sigma_2,\ldots ,\sigma_2,\beta\rangle_d=0$. This implies that $\mathrm{Eu}_{t\sigma_2}\star_t \gamma$ belongs to
 $H^\bullet(X)^{\mathrm{prim}}[[q,t]]$. 
 
 Since the monodromy action on the primitive cohomology is irreducible, the action of the monodromy invariant multiplication by $\mathrm{Eu}_{t\sigma_2}$ must be an homothety  $\lambda(t)\mathrm{id}$. Then there must exist an atom $\boldsymbol\lambda$ whose associated eigenvalue at a general point $\xi$ is (a deformation of) $\lambda(t)$, and such that $\nu_{\xi,\lambda(t)}^X\neq 0$. On the other hand, we know that there exist Noether-Lefschetz divisors in the moduli space of Gushel-Mukai fourfolds, that parametrize rational manifolds. On these divisors $\rho_{\xi,\lambda(t)}^X\ge 3$,
 and since they have only one extra Hodge class,  this implies  that on the very general Gushel-Mukai fourfold  $\rho_{\xi,\lambda(t)}^X\geq 2$. 
 
 But the only eigenvalue of the action of $\mathrm{Eu}_{t\sigma_2}$ on the ambient cohomology having multiplicity at least two  is $-4qt+O(t^2)$. So  this eigenvalue must coincide with $\lambda(t)$, and the Lemma follows.
 \end{proof}
 
 Denote by $\boldsymbol\lambda$ the atom appearing in the proof of the previous Lemma, with eigenvalue $\lambda_0(t)=-4qt+O(t^2)$  at the point $\xi_0=t\sigma_2$. The generalized eigenspace of $\lambda_0(t)$ is $$E_{t\sigma_2,\lambda_0(t)}=\langle \tilde{\alpha}(t),\tilde{\beta}(t)\rangle \oplus H^\bullet(X)^{\mathrm{prim}},$$ where 
 $\tilde{\alpha}(t), \tilde{\beta}(t)\in H^\bullet(X)^{\mathrm{amb}}$ are suitable deformations of $\alpha, \beta(t)$ respectively. 
 
 \begin{lemma}
 \label{lem_Jblock}
  We have $\mathrm{rank}_\CC(\kappa_{t\sigma_2}-\lambda_0(t))|_{E_{t\sigma_2,\lambda_0(t)}}=1$. The unique Jordan block of size two of $(\kappa_{t\sigma_2}-\lambda_0(t))|_{E_{t\sigma_2,\lambda_0(t)}}$ is obtained by restricting $\kappa_{t\sigma_2}-\lambda_0(t)$ to $E_{t\sigma_2,\lambda_0(t)}\cap H^\bullet(X)^{\mathrm{amb}}=\langle \tilde{\alpha}(t),\tilde{\beta}(t)\rangle $.
 \end{lemma}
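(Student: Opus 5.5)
The plan is to use the direct sum decomposition $E_{t\sigma_2,\lambda_0(t)}=\langle \tilde{\alpha}(t),\tilde{\beta}(t)\rangle \oplus H^\bullet(X)^{\mathrm{prim}}$ and show that $\kappa_{t\sigma_2}$ preserves each summand. The ambient part is preserved because $H^\bullet(X)^{\mathrm{amb}}$ is a subalgebra of the big quantum cohomology along ambient directions: $\sigma_2$ is ambient, and the Gromov-Witten invariants mixing ambient and primitive insertions vanish by the monodromy argument of Proposition \ref{prop_prim_action}. The primitive part is preserved by the proof of Lemma \ref{lem_eigenvalues_primitive}, which shows that $\mathrm{Eu}_{t\sigma_2}\star_t\gamma$ lies in $H^\bullet(X)^{\mathrm{prim}}[[q,t]]$. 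The same lemma shows that $\kappa_{t\sigma_2}$ acts on $H^\bullet(X)^{\mathrm{prim}}$ as the homothety $\lambda(t)\,\mathrm{id}$. Moreover, $\lambda(t)$ is the eigenvalue $\lambda_0(t)$, since the primitive part was identified inside this generalized eigenspace. Hence $\kappa_{t\sigma_2}-\lambda_0(t)$ vanishes identically on the primitive summand, and the rank of $(\kappa_{t\sigma_2}-\lambda_0(t))|_{E}$ equals its rank on the two-dimensional ambient summand.

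It remains to show that the ambient restriction has rank exactly one. On a two-dimensional generalized eigenspace the rank is at most one, because the operator is nilpotent there. So it suffices to show it is nonzero. For this I use Lemma \ref{lem_Jordan_block}: modulo $t^2$, $\kappa_{t\sigma_2}+4qt$ sends $\alpha\mapsto -t\beta(t)$ and $\beta(t)\mapsto 0$. The deformations $\tilde{\alpha}(t),\tilde{\beta}(t)$ agree with $\alpha,\beta(t)$ up to higher order, in the sense that they span the same plane to first order. In the basis $\tilde\alpha,\tilde\beta$ the $2\times 2$ matrix of $\kappa_{t\sigma_2}-\lambda_0(t)$ therefore has an off-diagonal entry $-t+O(t^2)$, which is nonzero as a power series in $t$. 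So the restriction is not identically zero, and over the field of Laurent series in $t$, or for small generic $t$, it has rank $1$. A nonzero nilpotent $2\times2$ operator is a single Jordan block of size two, and it lives on $\langle\tilde\alpha(t),\tilde\beta(t)\rangle$. Since the operator is zero on the primitive part, this is the unique block of size two, and every other block has size one.

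The main obstacle is making the "first order" argument rigorous. I must check that the generalized eigenspace of $\lambda_0(t)$ in the ambient part really is two-dimensional and close to $\langle\alpha,\beta\rangle$. This follows because at $t=0$ the ambient kernel $E_0^{\mathrm{amb}}$ is two-dimensional and the four other eigenvalues $\pm 2\sqrt{T_iq}$ are simple and nonzero, so they stay separated from $\lambda_0(t)$ for small $t$. I also need that higher-order corrections cannot cancel the $-t$ entry. This holds because the entry is a power series whose linear coefficient is $-1$. With these two points the first-order computation controls the rank.
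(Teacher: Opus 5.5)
Your proposal is correct and follows essentially the same route as the paper: $\kappa_{t\sigma_2}-\lambda_0(t)$ vanishes on the primitive summand, the ambient plane $\langle\tilde\alpha(t),\tilde\beta(t)\rangle$ is preserved because the mixed invariants vanish by monodromy, the rank is $\le 1$ by nilpotency on that plane, and it is $\ge 1$ by Lemma~\ref{lem_Jordan_block} (your perturbative discussion only spells out this last step, which the paper leaves implicit). One small caveat: the spectral gap at $t=0$ only shows that the ambient generalized eigenspace for the eigenvalues tending to $0$ is two-dimensional; that \emph{both} of these eigenvalues equal $\lambda_0(t)$, rather than splitting at higher order (which first-order data cannot exclude), is the input $\rho^X_{t\sigma_2,\lambda_0(t)}\ge 2$ from the proof of Lemma~\ref{lem_eigenvalues_primitive}, and you are implicitly relying on it through the decomposition you assume at the start.
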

 
 \begin{proof}
 In the proof of Lemma \ref{lem_eigenvalues_primitive} we have seen that $\rho^X_{t\sigma_2,\lambda_0(t)}$ is  greater or equal to two (because it is so on a generic deformation $\xi$ of $\xi_0=t\sigma_2$). Moreover, $\kappa_{t\sigma_2}|_{H^\bullet(X)^{\mathrm{prim}}}=\lambda_0(t)\mathrm{id}$, and  $\mathrm{rank}_\CC(\kappa_{t\sigma_2}-\lambda_0(t))|_{E_{t\sigma_2,\lambda_0(t)}}\geq 1$ by Lemma \ref{lem_Jordan_block}. 
 
 On the other hand, the image of  $H^\bullet(X)^{\mathrm{amb}}$ by the action of  $\kappa_{t\sigma_2}-\lambda_0(t)$ is contained in $H^\bullet(X)^{\mathrm{amb}}[[q,t]]\cap E_{t\sigma_2,\lambda_0(t)}$. Indeed, in the definition of this operator,  only Gromov-Witten invariants with at most one primitive insertion appear. By the irreducibility of the monodromy action, those with exactly one primitive insertion have to vanish. But these Gromov-Witten invariants are those that give the factor in the direction of the primitive cohomology. Thus $(\kappa_{t\sigma_2}-\lambda_0(t))(\langle \tilde{\alpha}(t),\tilde{\beta}(t)\rangle)\subset \langle \tilde{\alpha}(t),\tilde{\beta}(t)\rangle$, which implies the statement.
 \end{proof}
 
We use the same notations as in the Remark after the proof of Theorem \ref{thm_irrationality_GM}. For $\zeta_1,\zeta_2\in D(0,1)$,  define the $\mathbb{Q}$-evaluation function 
$$
\mathrm{ev}:\left\{q,t_0,\dots t_5 \right\} \to S^*, 
$$
$$
q\mapsto \zeta_1 b^4, \quad t_k\mapsto 0 \quad \forall k\neq 2, \quad t_2=:t\mapsto \zeta_2 b^{-2}.
$$

By \cite[Proposition 21]{jg}, this defines a $\mathbb{Q}$-evaluation map 
$$\mathrm{ev}:\hat{R}^*(X,\mathbb{Q})=\mathbb{Q}[[q,t_0,\dots,t_5]]\to S^*,$$ 
such that the evaluation $\mathrm{ev}(\kappa_\tau)$ is just the multiplication by $\mathrm{Eu}_{t\sigma_2}(X)=2h-t\sigma_2$  acting on the deformation of the small quantum cohomology ring along the direction $t$. We will take $t,\zeta_2\in \QQ$. By definition $\mathrm{ev}(\kappa_\tau)$ is a matrix with coefficients in $S^*=F[b^{\pm 1}]$ (the variable $b$ just keeps track of degrees).
 \begin{lemma}
 \label{lem_Q_eigenvalue}
 $\mathrm{ev}(\lambda_0(t))\in S^*$ and $\mathrm{ev}(\tilde{\alpha}(t)),\mathrm{ev}(\tilde{\beta}(t))\in H^\bullet(X)^{\mathrm{amb}}_{S^*}$.
 \end{lemma}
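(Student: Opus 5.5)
The plan is to show that the eigenvalue $\lambda_0(t)$ and the generalized eigenspace it determines inside the ambient part are cut out over the base field by data that is rational after evaluation. The key point is that the characteristic polynomial of $\mathrm{ev}(\kappa_\tau)$ restricted to $H^\bullet(X)^{\mathrm{amb}}_{S^*}$ has coefficients in $S^*$, since all Gromov--Witten invariants are rational numbers and $\mathrm{ev}$ is a $\QQ$-evaluation map. This restriction makes sense: by the monodromy argument in the proof of Lemma \ref{lem_Jblock}, $\kappa_\tau$ preserves $H^\bullet(X)^{\mathrm{amb}}[[q,t]]$. The goal is therefore to isolate $\mathrm{ev}(\lambda_0(t))$ as the root of a factor of this characteristic polynomial defined over $S^*$.

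First I would factor the characteristic polynomial $\chi(x)$ of $\kappa_\tau|_{\mathrm{amb}}$, a degree-six polynomial over $\QQ[[q,t]]$, as $\chi=\chi_0\cdot\chi_1$. Here $\chi_0$ has degree two and its roots specialize at $t=0$ to $0$. The factor $\chi_1$ has degree four, with roots specializing to $\pm 2\sqrt{T_iq}$, the nonzero eigenvalues of $2h\star$ read off from Theorem \ref{matrix}. At $t=0$ the factors $x^2$ and $P$-type are coprime. Hensel's lemma over $\QQ[[q,t]]$, or more precisely over the completed ring in which the grading makes sense, then gives this factorization with $\chi_0,\chi_1$ having $\QQ$-coefficients. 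Since $\mathrm{ev}$ is a ring map, $\mathrm{ev}(\chi_0)$ has coefficients in $S^*$.

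Second, I would show that $\chi_0$ is a perfect square, $\chi_0=(x-\lambda_0(t))^2$. This uses Lemma \ref{lem_Jblock}: on $\langle\tilde\alpha(t),\tilde\beta(t)\rangle$ the operator $\kappa-\lambda_0(t)$ is a single nilpotent Jordan block of size two, so there is only one eigenvalue there. Then $\lambda_0(t)=-\tfrac12\,\mathrm{tr}(\kappa|_{\ker\chi_0(\kappa)})$ is half the negative of the $x$-coefficient of $\chi_0$. This expresses $\lambda_0(t)$ rationally in terms of $S^*$-data, so $\mathrm{ev}(\lambda_0(t))\in S^*$. A consistency check is that the first-order term $-4qt$ agrees with Lemma \ref{lem_Jordan_block}.

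Third, for the vectors I would take $\tilde\beta(t)$ to span $\ker(\kappa-\lambda_0(t))\cap H^\bullet(X)^{\mathrm{amb}}$ and $\tilde\alpha(t)$ to span the full generalized eigenspace $\ker(\kappa-\lambda_0(t))^2$ modulo it, normalized to deform $\alpha$ and $\beta(t)$. After evaluation these kernels are kernels of matrices with entries in $S^*$. Since $S^*=F[b^{\pm1}]$ and $F$ is a field, Gaussian elimination over $F$, homogeneous in $b$, produces basis vectors with coefficients in $S^*$. The normalization is possible because the rank conditions ($\mathrm{rank}=1$ on the block, from Lemma \ref{lem_Jblock}) stay constant. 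The main obstacle I expect is justifying that $\mathrm{ev}$ commutes with the Hensel factorization and with the kernel computations, i.e. convergence in $F$ when $|\zeta_i|<1$. I would handle this through \cite[Proposition 21]{jg} applied to the power-series coefficients, noting that the factorization is obtained by an adically convergent process.
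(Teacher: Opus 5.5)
Your argument is correct in substance, but it takes a different route from the paper.

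\textbf{The paper's route.} The paper works after evaluation and on all of $H^\bullet(X)$. The characteristic polynomial of $\mathrm{ev}(\kappa_\tau)$ has coefficients in $S^*$, and $\mathrm{ev}(\lambda_0(t))$ is its unique root of multiplicity $24$. So it is fixed by the Galois group of a splitting field, and hence (characteristic zero) defined over the base. The vectors then come from Gaussian elimination, as in your third step.

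\textbf{Your route.} You work before evaluation and only on the $6\times 6$ ambient block. Hensel splits off the quadratic factor $\chi_0$. The $\lambda_0$-generalized eigenspace meets the ambient part in dimension two; this is the same input, from Lemmas \ref{lem_eigenvalues_primitive} and \ref{lem_Jblock}, that underlies the paper's multiplicity count. It gives $\chi_0=(x-\lambda_0(t))^2$, and $\lambda_0(t)$ is minus half the linear coefficient.

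\textbf{What each buys.} Your route shows that $\lambda_0(t)$ is a formal series with rational coefficients. So $\mathrm{ev}(\lambda_0(t))$ is literally an evaluation and lies in $S^*$. You do not need to check that evaluation keeps the multiplicity-$24$ eigenvalue isolated, nor to pass from $\mathrm{Frac}(S^*)$ to $S^*$ by integrality. The paper's argument is shorter but leaves those points implicit.

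\textbf{A sign slip.} $\lambda_0(t)=+\tfrac12\,\mathrm{tr}(\kappa|_{\ker\chi_0(\kappa)})$, not $-\tfrac12$. Your description via the $x$-coefficient is right.

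\textbf{The Hensel step must be set up differently than you state.}
\begin{itemize}
\item It cannot be applied over $\QQ[[q,t]]$ with respect to $(q,t)$, since modulo $(q,t)$ the characteristic polynomial is $x^6$.
\item It cannot be applied $t$-adically over $\QQ[[q]]$ either, since $x^2$ and $x^4-176qx^2-256q^2$ have resultant $256^2q^4$, which is not a unit there.
\item Instead, apply it $t$-adically over $\QQ[q^{\pm1}]$ (or after setting $q=1$), where these factors are coprime.
\item Uniqueness of the lift, together with the grading $\deg q=4$, $\deg t=-2$, $\deg x=2$, makes $\chi_0$ and $\chi_1$ homogeneous.
\item A homogeneous coefficient of degree $2k>0$ only involves monomials $q^{(k+b)/2}t^b$. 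So no negative powers of $q$ occur, and the coefficients lie in $\QQ[q][[t]]\subset\QQ[[q,t]]$.
\end{itemize}

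Once this is done, the obstacle you anticipate disappears. $\mathrm{ev}$ is a ring homomorphism on $\QQ[[q,t]]$, so $\mathrm{ev}(\chi)=\mathrm{ev}(\chi_0)\,\mathrm{ev}(\chi_1)$ and $\mathrm{ev}(\chi_0)=(x-\mathrm{ev}(\lambda_0(t)))^2$. No convergence argument is needed beyond \cite[Proposition 21]{jg}.
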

 
 \begin{proof}
 The eigenvalue $\mathrm{ev}(\lambda_0(t))$ of the matrix $\mathrm{ev}(\kappa_\tau)$ is the only one with multiplicity $24$, so it must be fixed by the Galois group of the decomposition field of the characteristic polynomial of $\mathrm{ev}(\kappa_\tau)$. Since we are in characteristic zero, this implies that $\mathrm{ev}(\lambda_0(t))\in S^*$. The statement about the elements $\mathrm{ev}(\tilde{\alpha}(t)),\mathrm{ev}(\tilde{\beta}(t))$ follows by Gauss' algorithm.
 \end{proof}
 
 \begin{theorem}
 \label{thm_K3&irrationality_GM}
 If $X$ is a rational Gushel-Mukai fourfold, there exists a projective K3 surface $S$ and an isomorphism of rational Hodge structures
 $$
 H^4(X,\mathbb{Q})^{\mathrm{prim}}\simeq H^2(S,\mathbb{Q})(-1).
 $$
 \end{theorem}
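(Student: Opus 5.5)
The plan is to follow Guéré's argument for cubic fourfolds \cite{jg}. The Jordan block of size two, which is missing in the small quantum cohomology, is supplied by the deformation along $t\sigma_2$ constructed above.

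First I fix the setting. Let $X$ be a rational Gushel-Mukai fourfold. By deformation invariance of Gromov-Witten invariants, all the quantum products computed above remain valid for $X$. The same holds for Lemmas \ref{lem_Jordan_block}, \ref{lem_eigenvalues_primitive}, \ref{lem_Jblock} and \ref{lem_Q_eigenvalue}, since the monodromy-invariance arguments only involve ambient and primitive classes. If $X$ has more Hodge classes, these lie in $H^4(X)^{\mathrm{prim}}$, which $\kappa_{t\sigma_2}$ acts on by the scalar $\lambda_0(t)$. I then apply the $\QQ$-evaluation $\mathrm{ev}$ ($q\mapsto\zeta_1b^4$, $t\mapsto \zeta_2b^{-2}$) and obtain an $S^*$-linear operator $\mathrm{ev}(\kappa_\tau)$. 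By Lemma \ref{lem_Q_eigenvalue}, its eigenvalue $\mathrm{ev}(\lambda_0(t))$ is defined over $S^*$, and the generalized eigenspace $E:=E_{t\sigma_2,\lambda_0(t)}$ is $\langle\tilde\alpha,\tilde\beta\rangle\oplus H^\bullet(X)^{\mathrm{prim}}$, also defined over $S^*$. By Lemma \ref{lem_Jblock}, $\kappa-\lambda_0$ restricted to $E$ has rank one, with its unique size-two Jordan block on the ambient part $\langle\tilde\alpha,\tilde\beta\rangle$. Its image is therefore spanned by $\tilde\beta$ and its kernel is $\langle\tilde\beta\rangle\oplus H^\bullet(X)^{\mathrm{prim}}$.

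Next I invoke the rationality statement of \cite{jg}, which is the refinement of \cite[Theorem 6.8]{kkpy} keeping track of the actual $F$-rational Hodge structures. For a rational fourfold, the atom $\boldsymbol\lambda$ containing $H^{3,1}$ must match, as a rational Hodge structure with this Jordan data, an atom coming from a blow-up centre. The candidates are points, curves, surfaces, or $\PP^4$. Points, curves and $\PP^4$ are excluded because they have no $H^{2,0}$-type (Hochschild degree two) contribution. So there is a smooth projective surface $S$, the centre of a blow-up in a weak factorization of the birational map, and the atom of $S$ containing $H^{2,0}(S)$ has the same Hodge realization. As in \cite{jg}, I would use the Jordan block to identify the Hodge realization of the atom with $\ker(\kappa-\lambda_0)/\mathrm{im}(\kappa-\lambda_0)$, or rather with its primitive part. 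On the $X$ side, this quotient is $H^\bullet(X)^{\mathrm{prim}}$, with the ambient pieces $\tilde\alpha,\tilde\beta$ cancelling. On the surface side, the analogous quotient for the atom of $S$ is a rational Hodge substructure of $H^2(S,\QQ)$ orthogonal to the Jordan-paired classes, such as $\sigma_0,\pt$ and the hyperplane class. Comparing gives a Hodge isometry, up to Tate twist and sign, of $H^4(X,\QQ)^{\mathrm{prim}}$ with a rank-$22$ sub-Hodge structure $T\subset H^2(S,\QQ)$ with $h^{2,0}(T)=1$. The rank is $22$ for a very general $X$; in general it is $22$ minus the extra Hodge classes absorbed into $E$, and I handle this by working with the full $H^\bullet(X)^{\mathrm{prim}}$, which $\kappa$ acts on by a scalar.

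Finally I produce the K3 surface. The structure $T$ is of K3 type and polarizable of signature $(3,19)$. By Morrison/Nikulin-type results, a rational Hodge structure of K3 type of dimension $22$ with this signature is, over $\QQ$, isomorphic to $H^2(S',\QQ)$ of a projective K3 surface $S'$: one realizes it by an even lattice of signature $(3,19)$, rationally equivalent to the K3 lattice, and applies surjectivity of the period map together with the Hodge classes present for projectivity. Alternatively, one deduces directly that $S$ has $p_g=1$ and $b_2\ge22$, and passes to its minimal model. I expect the main obstacle to be the middle step: checking that the Jordan-block formalism of \cite{jg} transfers, that is, that the size-two block found in the deformed (big) quantum cohomology, rather than the small one, plays exactly the role of the cubic-fourfold block in \cite{jg}, uniformly over the evaluation parameters $\zeta_1,\zeta_2$, so that the quotient isolates precisely $H^\bullet(X)^{\mathrm{prim}}$.
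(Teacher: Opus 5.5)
Your set-up and the $X$-side analysis agree with the paper. By Lemmas \ref{lem_Jblock} and \ref{lem_Q_eigenvalue}, after evaluation one has $\ker(\kappa-\lambda_0)=\langle\tilde\beta\rangle\oplus H^\bullet(X)^{\mathrm{prim}}$ and $\mathrm{im}(\kappa-\lambda_0)=\langle\tilde\beta\rangle$, defined over $S^*$. The gap is in how you obtain the K3 surface. From the atom comparison you only extract ``some smooth projective surface $S$'' and a $22$-dimensional sub-Hodge structure $T\subset H^2(S,\QQ)$ with $h^{2,0}(T)=1$. You then try to manufacture a K3 surface from $T$, and neither of your routes achieves this.

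\begin{itemize}
\item The conditions $p_g(S)=1$ and $b_2(S)\geq 22$ do not make $S$ birational to a K3 surface: a blow-up of a surface of general type with $p_g=1$, $q=0$ satisfies both. In any case $T$ is not identified with $H^2$ of a minimal model.
\item The lattice-theoretic route assumes what has to be proved. Surjectivity of the period map needs a Hodge-compatible rational form on $T$ that is rationally isometric to $\Lambda_{K3}\otimes\QQ$, and a Hodge class of positive square for projectivity. Neither is automatic. Rational quadratic spaces of the same rank and signature can differ in their local Hasse invariants. If the transcendental part has only scalar Hodge endomorphisms, the compatible forms on it are just rescalings of one form. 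You also never show that $H^4(X,\QQ)^{\mathrm{prim}}$ contains any Hodge class.
\item Your signature bookkeeping is inconsistent. $H^4(X,\QQ)^{\mathrm{prim}}$ has signature $(20,2)$, so it cannot be isometric, even up to sign, to a space of signature $(3,19)$. The theorem only asserts an isomorphism of Hodge structures, not an isometry.
\end{itemize}

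The missing idea is to feed the full package of invariants from Lemma \ref{lem_Jblock} into Gu\'er\'e's argument, not just the presence of $H^{3,1}$. These invariants are $\nu=1$, $\nu'=0$ and $\gamma=1$ at $(\xi_0,\lambda_0(t))$. In the paper they show that property $\varheartsuit$ of \cite[Definition 27]{jg} fails for $\mathrm{ev}$. The argument of \cite[Theorem 56]{jg} then directly produces a polarized K3 surface $S$ and an isomorphism $f$ from its quantum cohomology onto $\mathrm{ev}(E_{\xi_0,\lambda_0(t)})$, intertwining $\mathrm{ev}(\kappa_\tau)$ with $\mathrm{ev}^S(\kappa^S_\tau)$. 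It is $\nu'=0$ and the rank-one condition that let this argument output a K3 surface rather than an arbitrary blow-up centre. You mention ``Jordan data'' but never exploit it.

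Then \cite[Corollary 51]{jg} gives $\ker\kappa^S_\nu=H^2(S)\oplus H^4(S)$ and $\mathrm{im}\,\kappa^S_\nu=H^4(S)$. So $\ker/\mathrm{im}$ is all of $H^2(S)$, including the polarization class; it is not orthogonal to the hyperplane class, as you wrote. Comparing with $\ker/\mathrm{im}\simeq H^\bullet(X)^{\mathrm{prim}}$ on the $X$ side gives the Hodge isomorphism with no lattice theory at all. Lemma \ref{lem_Q_eigenvalue} enters because \cite[Lemma 57]{jg} requires the Jordan basis to be defined over $S^*$ rather than over $S^*\otimes_\QQ\tilde{\QQ}_{\mathrm{ext}}$.
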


 Of course there is a huge difference between this statement over $\QQ$ and the same statement over $\ZZ$. By definition, $H^2(S,\QQ)$ and its Hodge structure determine the K3 surface $S$ only up to a rational isogeny. It was proved in \cite{isogenous} that for two projective K3 surfaces, being isogenous is equivalent to having the same Chow motive, or to  be joined by a chain of twisted derived equivalences. A typical example is another
 K3 surface $S'$ that is a fine moduli space of stable sheaves on $S$. 
 
 One could wonder if some
 theory of atoms over $\ZZ$ could allow to upgrade our theorem to integral coefficients. 
  
 \begin{proof}[Proof, following \cite{kkpy} and \cite{jg}]
As in the proof of Theorem \ref{thm_irrationality_GM}, we will use the fact that the decomposition in generalized eigenspaces for the operator $\kappa_{\xi}$ of quantum multiplication  by $\mathrm{Eu}_{\xi}$ is finer, at a generic point $\xi\in H^\bullet(X)^{\mathrm{Hdg}}$, than at a special point $\xi_0$. Our special point will be $\xi_0=t\sigma_2$, so that $\mathrm{Eu}_{\xi_0}=\mathrm{Eu}_{t\sigma_2}=2h-t\sigma_2$ acting on the deformation along $t$ of the small quantum cohomology. We work modulo $t^2$. Let $u$ be an eigenvalue of $\kappa_{\xi_0}$, and $E_{\xi_0,u}$ be the corresponding generalized eigenspace. Recall that $\nu_{\xi_0,u}^X:=\dim_\CC(E_{\xi_0,u}\cap H^{(2)}(X))$, and let also  $$\nu'^X_{\xi_0,u}:=\dim_\CC(E_{\xi_0,u}\cap H^{(1)}(X))\quad \mathrm{and}\quad  \gamma_{\xi_0,u}^X:=\mathrm{rank}_\CC(\kappa_{\xi_0}-u)|_{E_{\xi_0,u}},$$ for generic $q,t$.
In our case, by Lemma \ref{lem_Jblock}, we have  $$\nu^X_{\xi_0,\lambda_0(t)}=1, \qquad  \nu'^X_{\xi_0,\lambda_0(t)}=0, \qquad \gamma^X_{\xi_0,\lambda_0(t)}=1.$$ 
Moreover the matrix of $\kappa_{\xi_0}$, restricted to $E_{\xi_0,\lambda_0(t)}^{\mathrm{amb}}$ is given  at first order in $t$ by Lemma \ref{lem_Jordan_block}.
By Lemma \ref{lem_Jblock}
$$\ker(\kappa_{\xi_0}-\lambda_0(t) \mathrm{id})=H^\bullet(X)^{\mathrm{prim}} \oplus \langle \tilde{\beta}(t)\rangle\quad\mathrm{and}\quad  \mathrm{im}(\kappa_{\xi_0}-\lambda_0(t) \mathrm{id})=\langle \tilde{\beta}(t)\rangle .$$ 

We use the  evaluation map $\mathrm{ev}$ defined before Lemma \ref{lem_Q_eigenvalue}. Since $$\nu^X_{\xi_0,\lambda_0(t)}=1, \qquad \nu'^X_{\xi_0,\lambda_0(t)}=0 \qquad  \gamma^X_{\xi_0,\lambda_0(t)}=1,$$ 
property $\varheartsuit_{\hat{R}^*(X,\tilde{\mathbb{Q}}_{\mathrm{ext}})}$ (\cite[Definition 27]{jg}) is not satisfied for our evaluation map. This implies by \cite[Remark 29]{jg} that property $\varheartsuit_{\hat{R}^*(X,R)}$ is not satisfied either for any number field $R$ containing $\tilde{\mathbb{Q}}_{\mathrm{ext}}$. Moreover the above equalities for the kernel and image of $\kappa_{\xi_0}$ are also valid when applying $\mathrm{ev}$:
$$
\ker(\mathrm{ev}(\kappa_{\tau}-\lambda_0(t) \mathrm{id}))=\mathrm{ev}(H^\bullet(X)^{\mathrm{prim}} \oplus \langle \tilde{\beta}(t)\rangle)\quad\mathrm{and}$$
$$\quad \mathrm{im}(\mathrm{ev}(\kappa_{\tau}-\lambda_0(t) \mathrm{id}))=\mathrm{ev}(\langle \tilde{\beta}(t)\rangle ).
$$

If $X$ is rational,
arguing as in the proof of \cite[Theorem 56]{jg}, we deduce that the quantum cohomology of some polarized K3 surface $S$ must be sent isomorphically to $\mathrm{ev}(E_{\xi_0,\lambda_0(t)})$. Let us denote by $f$ such an isomorphism.
 
 By \cite[Corollary 51]{jg}, in the quantum cohomology of the K3 surface $S$, if the rank of the multiplication $\kappa^{S}_{\nu}$ by the Euler vector field is nonzero for some $\nu\in H^\bullet(S)^{\mathrm{Hdg}}$, then  $$\ker(\kappa^S_\nu)=H^2(S)\oplus H^4(S)\quad\mathrm{and}\quad
 \mathrm{im}(\kappa^S_\nu)=H^4(S).$$ Arguing again as in the proof of \cite[Theorem 56]{jg}, this implies that $f$ sends $\kappa_{\xi_0}$ to $\kappa^S_\nu$,  and the image and kernel of these endomorphisms are sent one to the other (more precisely, there exists an evaluation map $\mathrm{ev}^{S}$ of the quantum cohomology of the K3 surface such that $f$ sends $\mathrm{ev}(\kappa_\tau)$ to $\mathrm{ev}^{S}(\kappa^S_\tau)$). One deduces that, for a generic choice of $q,t\in\mathbb{Q}$, the composition $$H^\bullet(X)^{\mathrm{prim}}\subset E_{\xi_0,\lambda_0(t)}\xrightarrow{f} H^\bullet(S)\to H^2(S)$$ is an isomorphism of $\mathbb{Q}$-Hodge structures, which concludes the proof. 
 
 Notice that in the proof of \cite[Theorem 56]{jg}, in order to pass from $\tilde{Q}_{\mathrm{ext}}$-coefficients (where $\tilde{Q}_{\mathrm{ext}}$ is a field extension of $\QQ$ explicitly defined in \cite{jg}, allowing to use Iritani's blowup formula and containing all roots of characteristic polynomials of $\mathrm{ev}(\kappa_\tau)$ involved in all blowups of the weak factorization) one needs to use Lemma \cite[Lemma 57]{jg}. In this lemma the crucial point is that the basis which gives the Jordan form of $\mathrm{ev}(\kappa_\tau)$ has coefficients in $S^*=F[b^{\pm 1}]$ rather than $S^*_{\tilde{Q}_{\mathrm{ext}}}=S^*\otimes_\QQ \tilde{Q}_{\mathrm{ext}}$. In our situation, this is ensured 
 by Lemma \ref{lem_Q_eigenvalue}.
 \end{proof}



\begin{thebibliography}{Aa}

\bibitem{addington-thomas} 
Addington N.,  Thomas R.,
{\it Hodge theory and derived categories of cubic fourfolds}, 
Duke Math. J. {\bf 163} (2014), no. 10, 1885–1927. 

\bibitem{bvb}
B\"ohning Ch., Graf von Bothmer H.-Ch., 
{\it
Degenerations of Gushel-Mukai fourfolds, with a view towards irrationality proofs},
Eur. J. Math. {\bf 4} (2018), no. 3, 802–826. 

\bibitem{dim0} Debarre O., Iliev A., Manivel L., {\it
On nodal prime Fano threefolds of degree 10}, 
Sci. China Math. 54 (2011), no. 8, 1591–1609. 

\bibitem{dim} Debarre O., Iliev A., Manivel L., {\it Special prime Fano fourfolds of degree 10 and index 2}, in Recent Advances in Algebraic Geometry, pp. 123–155, London Math. Soc. Lecture Note Ser. {\bf 47}, Cambridge 2015.

\bibitem{debkuz} Debarre O.,  Kuznetsov A.,
{\it Gushel-Mukai varieties: linear spaces and periods}, 
Kyoto J. Math. {\bf 59} (2019), no. 4, 897–953.

\bibitem{debkuz2} Debarre O.,  Kuznetsov A.,
{\it  Quadrics on Gushel-Mukai varieties},  arXiv:2409.03528.

\bibitem{grz}
Grzelakowski K., Rampazzo M., Zhang S., {\it
Categorical resolutions and birational geometry of nodal Gushel-Mukai varieties},
arXiv:2602.14109.

\bibitem{jg}
Guéré J., {\it  On the irrationality of cubic fourfolds}, arXiv:2603.04518.

\bibitem{glz}
Guo H., Liu Z., Zhang S., {\it  Conics on Gushel-Mukai fourfolds, EPW sextics and Bridgeland moduli spaces}, Math. Res. Lett. {\bf 31} (2024), no. 4, 1061–1106.

\bibitem{hs}
Hoff M., Staglian\`o G., {\it New examples of rational Gushel-Mukai fourfolds}, Math. Z. {\bf 296}, 1585–1591 (2020).

\bibitem{isogenous}
Huybrechts D., {\it 
Motives of isogenous K3 surfaces}, 
Comment. Math. Helv. {\bf 94} (2019), no. 3, 445–458. 

\bibitem{im} Iliev A., Manivel L., 
{\it Fano manifolds of degree ten and EPW sextics},
Ann. Sci. Éc. Norm. Supér. (4) {\bf 44} (2011), no. 3, 393–426. 
\bibitem{Ir2} Iritani, H.
{\it Notes on the decomposition theorem for blowups},
arXiv:2604.10028.

\bibitem{kkpy} Katzarkov L., Kontsevich M., Pantev T., 
Yu T.Y.,  {\it Birational Invariants from Hodge Structures and Quantum Multiplication}, arXiv:2508.05105.

\bibitem{kp} Kuznetsov A.,  Perry A.,
{\it Derived categories of Gushel-Mukai varieties},
Compos. Math. {\bf 154} (2018), no. 7, 1362–1406. 

\bibitem{ppz} Perry A., Pertusi L., Zhao X.,
{\it Stability conditions and moduli spaces for Kuznetsov components of Gushel-Mukai varieties}, 
Geom. Topol. {\bf 26} (2022), no. 7, 3055–3121. 

\bibitem{schreieder} Schreieder S., {\it Quadric surface bundles over surfaces and stable rationality},  Algebra Number Theory {\bf 12} (2018), no. 2, 479–490.

\bibitem{sta}
Staglian\`o G., {\it 
Some new rational Gushel fourfolds}, 
Mediterr. J. Math. {\bf 18} (2021), no. 5, Paper No. 182, 17 pp. 

\bibitem{Vo}
Voisin C., {\it 
Hodge theory and complex algebraic geometry. II}, 
Mediterr. Cambridge Studies in Advanced Mathematics {\bf 77} (2003), pp. x+351

\end{thebibliography}
\end{document}